\begin{document}

\newtheorem{theorem}{Theorem}[section]
\newtheorem{main}{Main Theorem}
\newtheorem{proposition}[theorem]{Proposition}
\newtheorem{corollary}[theorem]{Corollary}
\newtheorem{definition}[theorem]{Definition}
\newtheorem{lemma}[theorem]{Lemma}
\newtheorem{example}[theorem]{Example}
\newtheorem{remark}[theorem]{Remark}
\newtheorem{question}[theorem]{Question}
\newtheorem{conjecture}[theorem]{Conjecture}
\newtheorem{fact}[theorem]{Fact}
\newtheorem*{ac}{Acknowledgements}

\newcommand{\FS}{\mathfrak{F}_s}
\newcommand{\Z}{\mathbb{Z}}

\title{Non-commutative R\'{e}nyi Entropic Uncertainty Principles}

\author{Zhengwei Liu}
\address{Department of Mathematics and Department of Physics,
Harvard University}
\email{zhengweiliu@fas.harvard.edu}
\author{Jinsong Wu}
\address{Institute of Advanced Study in Mathematics, Harbin Institute of Technology}
\email{wjs@hit.edu.cn}

\begin{abstract}In this paper, we calculate the norm of the string Fourier transform on subfactor planar algebras and characterize the extremizers of the inequalities for parameters $0<p,q\leq \infty$.
Furthermore, we establish R\'{e}nyi entropic uncertainty principles for subfactor planar algebras.
\end{abstract}

%%% Keywords. ¹ØŒüŽÊ
\keywords{Renyi entropy, uncertainty principles, Fourier transform, subfactors}

%%% MSC numbers. Requested items
%\MSC{46L37, 43A32}

\maketitle

\section{Introduction}
A fundamental result in quantum mechanics is Heisenberg's uncertainty principle for position and momentum.
By using the Shannon entropy of the measurement, the Hirschman-Beckner uncertainty principle was established \cite{Hirsch, Beck}.
The R\'{e}nyi entropy introduced by A. R\'{e}nyi \cite{renyi} generalized the Shannon entropy. 
In  2006, Iwo Bialynicki-Birula \cite{Birula} showed the R\'{e}nyi entropic uncertainty principles for position and momentum and also for a pair of complementary observables in $N$-level systems. 
The R\'{e}nyi entropy has been used for quantum entanglement \cite{BCHAS, GuLe}, quantum communication protocols \cite{GL, RGK}, quantum correlation \cite{LNP} , quantum measurement \cite{BG} etc.
The R\'{e}nyi entropy has applications in biology, linguistics, economics, computer sciences etc as well.
The max-entropy, the min-entropy and the collision entropy are important in quantum mechanics and they can be considered as special limits of the R\'{e}nyi entropy.

In 1936, Murray and von Neumann introduced von Neumann algebras and factors to investigate the connections between mathematics and quantum mechanics \cite{MurNeu36}. 
A subfactor is an inclusion of factors $\mathcal{N} \subset \mathcal{M}$ and its index $\delta^2$ describes the relative size of the two factors. 
Jones gave a surprising classification of the indices of subfactors in \cite{Jon83}. 
The index of a subfactor generalizes the order of a group, but it could be a non-integer which has been considered as a quantum dimension in various ways. 
Subfactor theory turns out to be a natural framework to study quantum symmetries appeared in statistical physics, conformal field theory and topological quantum field theory, see \cite{EvaKaw98}.

In \cite{JLW16}, Jiang and the authors proved various uncertain principles for subfactors in terms of Jones' planar algebras \cite{Jon99}, including the Donoho-Stark uncertainty principle for max-entropy, the Hirschman-Beckner uncertainty principle for the von Neumann entropy, and Hardy's uncertainty principle.

In the noncommutative case, the R\'{e}nyi entropy of order $p$ is defined by:
$$h_p(x)=\frac{p}{1-p}\log \|x\|_p\;, ~p\in (0,1)\cup (1,\infty),$$
where $x$ is an operator in a von Neumann algebra with a trace $tr$ and 
$$\|x\|_p=(tr_2(|x|^p))^{1/p},  ~p\in (0,\infty).$$
When $p\geq 1$, $\|x\|_p$ is called the $p$-norm of $x$.
It is natural to ask whether the R\'{e}nyi entropic uncertainty principles hold for subfactor planar algebras.
In this paper, we answer this question positively.

To establish the R\'{e}nyi entropic uncertainty principles, we calculate the norm of the string Fourier transform $\FS$ (SFT) on subfactor planar algebras.
%which is diagrammatically the 1-click rotation in planar algebras. 
We divide the first quadrant into three regions $R_{T},R_{F},R_{TF}$ as illustrated in Figure~\ref{Fig:SFT norm}, and let $K$ be a function on $[0,\infty)^2$ given by
\begin{equation}\label{K}
K(1/p,1/q)=\left\{
\begin{array}{lll}
\delta^{1-2/p} &\text{ for } &(1/p,1/q)\in R_F,\\
\delta^{2/q-1} &\text{ for } &(1/p,1/q)\in R_T,\\
\delta^{2/q-2/p} &\text{ for }& (1/p,1/q)\in R_{TF}.
\end{array}
\right.
\end{equation}

% where $\delta$ is the square root of the Jones index for a subfactor.

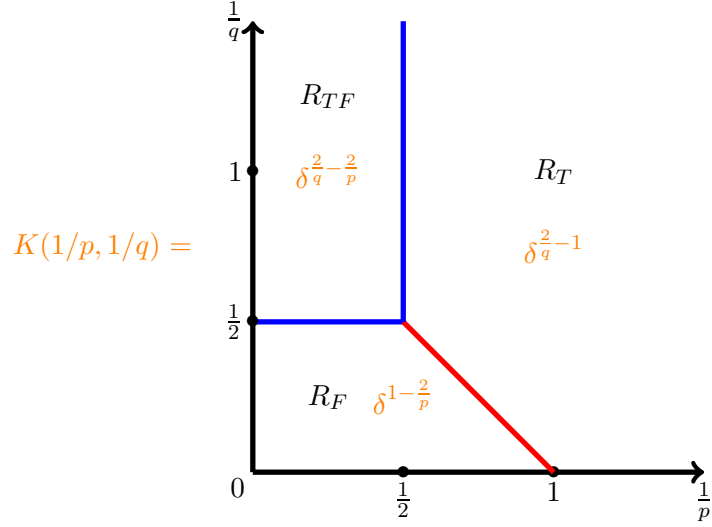
\begin{figure}
\begin{center}
\begin{tikzpicture}[scale=2]
\node at (-1,1.5) {$\textcolor{orange}{K(1/p,1/q)=}$};
\draw [line width=2] [->](0,0)--(0,3);
\node at (-0.1,-0.1) {0};
\node [left] at (0, 3) {$\frac{1}{q}$};
\draw [line width=2]  [->](0,0)--(3,0);
\node [below] at (1,0) {$\frac{1}{2}$};
\node at (1,0) {$\bullet$};
\node [below] at (2, 0) {1};
\node at (2,0) {$\bullet$};
\node [below] at (3,0) {$\frac{1}{p}$};
\draw [blue, line width=2] (1,1)--(0,1);
\node [left] at (0,1) {$\frac{1}{2}$};
\node at (0,1) {$\bullet$};
\node [left] at (0, 2) {1};
\node at (0,2) {$\bullet$};
\node [orange] at (0.5,2) {$\delta^{\frac{2}{q}-\frac{2}{p}}$};
\node at (0.5,2.5) {$R_{TF}$};
\draw [red, line width=2] (1,1)--(2,0);
\node at (0.5, 0.5) {$R_F$};
\node [orange] at (1, 0.5) {$\delta^{1-\frac{2}{p}}$};
\draw [blue, line width=2] (1,1)--(1,3);
\node [orange] at (2, 1.5) {$\delta^{\frac{2}{q}-1}$};
\node at (2,2) {$R_T$};
\end{tikzpicture}
\end{center}
\caption{The norm of the SFT.}
\label{Fig:SFT norm}
\end{figure}

\begin{theorem}[Proposition \ref{RF}, \ref{RTF}, \ref{RT}, Theorem \ref{Fourier}]
Suppose $\mathscr{P}_{\bullet}$ is an irreducible subfactor planar algebra.
Let $0<p,q\leq \infty$ and $x\in\mathscr{P}_{2,\pm}$.
Then
\begin{align}\label{Equ: HYpq}
K(1/p,1/q)^{-1}\|x\|_p\leq  \|\FS(x)\|_q\leq K(1/p,1/q)\|x\|_p.
\end{align}
\end{theorem}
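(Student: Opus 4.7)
My plan is to prove the two-sided estimate region by region, matching the three propositions and the final theorem referenced in the attribution. In each region the engine is complex noncommutative (Kosaki) interpolation on $L^p(\mathscr{P}_{2,\pm}, \mathrm{tr}_2)$, anchored at a small set of endpoint estimates extracted from the planar-tangle description of $\FS$.

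\textbf{Endpoint bounds.} I would catalog the value of $K$ at each corner of $[0,1]^2$ by establishing four key inequalities: Plancherel $\|\FS(x)\|_2 = \|x\|_2$ at $(1/p,1/q)=(1/2,1/2)$, already known from \cite{JLW16}; the sharp Hausdorff--Young endpoint $\|\FS(x)\|_\infty \leq \delta^{-1}\|x\|_1$ at $(1,0)$; the operator bound $\|\FS(x)\|_\infty \leq \delta\|x\|_\infty$ at $(0,0)$; and $\|\FS(x)\|_1 \leq \delta^2 \|x\|_\infty$ at $(0,1)$. Combined with the volume-comparison $\|y\|_r \leq \delta^{2/r-2/s}\|y\|_s$ valid for $r\leq s$, these determine $K$ at every lattice corner of the three regions. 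The nontrivial endpoints are proved using the rotation tangle defining $\FS$: expanding $x$ in a matrix-unit basis of $\mathscr{P}_{2,\pm}$, the matrix coefficients of the SFT in that basis have absolute value at most $\delta^{-1}$ by a graphical Cauchy--Schwarz, and summing yields the claimed bounds.

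\textbf{Interpolation and lower bound.} In $R_F$, where $\log K$ depends only on $1/p$, I fix $p$ and interpolate $\FS : L^p \to L^q$ over $q$ between the Hausdorff--Young/Plancherel endpoint (at $q = \max\{2,\,p/(p-1)\}$) and $q=\infty$. In $R_T$, by the symmetric argument, I interpolate over $p$ with $q$ fixed. In $R_{TF}$, where $\log K$ is linear in both variables and the region sits in $\{p\geq 2,\,q\leq 2\}$, the bound follows directly from Plancherel composed with the volume-comparison on both sides; alternatively one bilinearly interpolates between the corners $(0,1/2),(1/2,1/2),(1/2,1),(0,1)$. Since $\log K$ is affine on each region, the Kosaki interpolation output matches (\ref{K}) exactly. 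The lower bound $K^{-1}\|x\|_p\leq \|\FS(x)\|_q$ then follows by duality: $\FS$ is unitary on $L^2$, so $\FS^{-1}=\FS^{*}$ with respect to the trace pairing, and the identity $\|\FS^{-1}\|_{q\to p} = \|\FS\|_{q/(q-1)\to p/(p-1)}$ combined with a region-by-region check reproduces the reciprocal $K^{-1}$; this also identifies extremizers, matching the characterization promised in the abstract.

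\textbf{Main obstacle.} The crux is the sharp endpoint $\|\FS(x)\|_\infty \leq \delta^{-1}\|x\|_1$: the constant $\delta^{-1}$ is dictated by the quantum dimension and must be extracted from the biprojection / matrix-unit structure of $\mathscr{P}_{2,\pm}$, rather than by a soft interpolation argument. Everything else is bookkeeping across three regions; the main care is to verify that the three piecewise formulas for $K$ agree on their shared boundaries and that the endpoint pair chosen in each region is compatible with Riesz--Thorin so that no extra factors are lost.
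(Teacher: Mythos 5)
Your plan essentially lives in the Banach range $1\leq p,q\leq\infty$, but the theorem is stated for all $0<p,q\leq\infty$, and two of the three regions genuinely extend below exponent $1$: in $R_T$ one has $1/p\geq 1/2$ with $1/p$ (and $1/q$) unbounded, and in $R_{TF}$ one has $1/q\geq 1/2$ unbounded. This is where your two main tools break down. Kosaki/Riesz--Thorin complex interpolation is not available for quasi-norms with exponent below $1$, so anchoring at corners of $[0,1]^2$ and interpolating cannot reach those points. More importantly, in $R_T$ the constant $K=\delta^{2/q-1}$ is \emph{independent of} $p$ even for $p<1$, and this cannot follow from your ``volume comparison'' $\|y\|_r\leq\delta^{2/r-2/s}\|y\|_s$ ($r\leq s$): that inequality (H\"older against $\|1\|_t=\delta^{2/t}$) only goes in the direction of smaller exponent, whereas what is needed is $\|x\|_1\leq\|x\|_p$ for $0<p\leq 1$. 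The paper supplies this as a separate lemma (Lemma \ref{norm1}), whose proof uses the structural fact that every nonzero projection in $\mathscr{P}_{2,\pm}$ has trace at least $1$, so that $p\mapsto\|x\|_p$ behaves like a counting-measure norm and is monotone. Without this ingredient your $R_T$ bound for $p<1$ (and the $1/q>1/2$, $1/p>1$ subcase) simply does not close. The same issue infects your lower bound: the duality identity $\|\FS^{-1}\|_{q\to p}=\|\FS\|_{q'\to p'}$ is meaningless for $p$ or $q$ below $1$; the paper instead just writes $\|x\|_p=\|\FS^{-1}\FS(x)\|_p\leq K(1/q,1/p)\|\FS(x)\|_q$, i.e.\ applies the already-proved upper bound to the inverse rotation, which works for every exponent.

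Within $1\leq p,q\leq\infty$ your route is workable but heavier than necessary: the paper derives everything by chaining the known sharp Hausdorff--Young inequality (Proposition \ref{Prop: hyoungeq}, from \cite{JLW16}) with H\"older's inequality against $\|1\|_t$ (exactly your ``Plancherel plus volume comparison'' computation in $R_{TF}$, which does coincide with the paper's argument and, unlike interpolation, is valid for $q<1$). In particular the endpoint $\|\FS(x)\|_\infty\leq\delta^{-1}\|x\|_1$ that you flag as the crux is already available by citation, and your proposed rederivation via a matrix-coefficient bound ``$\leq\delta^{-1}$ by graphical Cauchy--Schwarz'' is not obviously correct as stated (matrix units have varying traces), though it is inessential. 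To repair the proposal: keep your $R_F$ and $R_{TF}$ arguments in H\"older form rather than interpolation form, add the monotonicity lemma $\delta^{2-2/p}\|x\|_p\leq\|x\|_1\leq\|x\|_p$ for $0<p\leq1$ (from $tr_2$ of nonzero projections being $\geq1$) to handle $R_T$ beyond $p=1$, and obtain the lower bound by applying the upper bound to $\FS^{-1}$ instead of by duality.
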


\begin{theorem}[{\bf R\'{e}nyi entropic uncertainty principles}: Proposition \ref{Renyi1}]
Suppose $\mathscr{P}_{\bullet}$ is an irreducible subfactor planar algebra.
For any $x\in \mathscr{P}_{2,\pm}$ with $\|x\|_2=1$, $0< p, q<\infty$, we have that
$$(1/p-1/2)h_{p/2}(|x|^2)+(1/2-1/q)h_{q/2}\left(|\FS(x)|^2\right)\geq -\log K(1/p,1/q).$$
\end{theorem}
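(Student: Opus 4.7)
The plan is to reduce the claimed Rényi entropic uncertainty principle directly to the non-commutative Hausdorff--Young inequality $\|\FS(x)\|_q \leq K(1/p,1/q)\,\|x\|_p$ furnished by the preceding theorem. The coefficients $(1/p-1/2)$ and $(1/2-1/q)$ in front of the two Rényi entropies have been chosen precisely so that the Rényi-parameter dependence cancels out and only a bare ratio of $p$- and $q$-norms survives.

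Concretely, since $|x|^2\geq 0$ one has $\||x|^2\|_{p/2}^{p/2}=tr_2(|x|^p)=\|x\|_p^p$, hence $\||x|^2\|_{p/2}=\|x\|_p^2$, and by the definition of $h_{p/2}$,
\begin{align*}
h_{p/2}(|x|^2) \;=\; \frac{p/2}{1-p/2}\,\log\|x\|_p^{2} \;=\; \frac{2p}{2-p}\,\log\|x\|_p .
\end{align*}
Multiplying by $(1/p-1/2)=(2-p)/(2p)$ collapses this to $\log\|x\|_p$, and the identical computation on the Fourier side yields $(1/2-1/q)\,h_{q/2}(|\FS(x)|^2)=-\log\|\FS(x)\|_q$. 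Consequently the left-hand side of the stated inequality equals $\log\bigl(\|x\|_p/\|\FS(x)\|_q\bigr)$, and the claim
\begin{align*}
\log\|x\|_p-\log\|\FS(x)\|_q \;\geq\; -\log K(1/p,1/q)
\end{align*}
is equivalent to $\|\FS(x)\|_q\leq K(1/p,1/q)\|x\|_p$, which is exactly the upper Hausdorff--Young bound already established.

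Hence no new analytic input is required: the proposition is a purely algebraic rearrangement of the norm estimate for $\FS$, and the real work was absorbed into the three region-by-region computations $R_F,R_T,R_{TF}$. The normalization $\|x\|_2=1$ is natural (it makes $|x|^2$ a density) but not restrictive, because both sides of the inequality are invariant under the rescaling $x\mapsto\lambda x$. The only bookkeeping point is that the factors $(1/p-1/2)$ and $2p/(2-p)$ change sign simultaneously as $p$ crosses $2$, so the identity $(1/p-1/2)\,h_{p/2}(|x|^2)=\log\|x\|_p$ holds uniformly for $0<p<\infty,\;p\neq 2$; the excluded values $p=2$ or $q=2$ follow by continuity in the Rényi parameter and are consistent with $\|\FS(x)\|_2=\|x\|_2$ and $K(1/2,1/2)=1$. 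The step that could look like the main obstacle -- namely where the $\delta$-dependent sharp constant $K(1/p,1/q)$ actually comes from -- has already been handled by the Hausdorff--Young-type theorem, so here it enters merely as a black box.
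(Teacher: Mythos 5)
Your proof is correct and follows essentially the same route as the paper: the identity $(1/p-1/2)h_{p/2}(|x|^2)=\log\|x\|_p$ (and its analogue on the Fourier side) reduces the entropic inequality to the norm bound $\|\FS(x)\|_q\leq K(1/p,1/q)\|x\|_p$, which is exactly how the paper argues region by region. The only difference is cosmetic: you invoke the constant $K$ uniformly, while the paper writes out the $R_F$ case with $\delta^{1-2/p}$ and notes the other regions are similar.
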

\noindent We also prove a second R\'{e}nyi entropic uncertainty principles:
\begin{theorem}[{\bf R\'{e}nyi entropic uncertainty principles}: Theorem \ref{renyi2}]
Suppose $\mathscr{P}_{\bullet}$ is an irreducible subfactor planar algebra.
Let $x\in \mathscr{P}_{2,\pm}$ be such that $\|x\|_2=1$.
Then for any $1/p+1/q\geq  1$, we have
$$h_{p/2}(|x|^2)+h_{q/2}(|\FS(x)|^2)\geq \left(-1+\frac{2}{2-p}+\frac{2}{2-q}\right)\log \delta^2.$$
\end{theorem}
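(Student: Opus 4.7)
The plan is to deduce the inequality from the Hausdorff-Young estimate of the first theorem, specialized to the critical line $1/\tilde p+1/\tilde q=1$, combined with log-convexity of the $L^p$ norms of $x$ and $\FS(x)$ anchored at the $L^2$-normalization $\|x\|_2=1$.

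First, I rewrite each R\'{e}nyi entropy as a multiple of a log-norm,
$$h_{p/2}(|x|^2)=\tfrac{2p}{2-p}\log\|x\|_p,\qquad h_{q/2}(|\FS(x)|^2)=\tfrac{2q}{2-q}\log\|\FS(x)\|_q,$$
so that the inequality is equivalent to
$$\tfrac{2p}{2-p}\log\|x\|_p+\tfrac{2q}{2-q}\log\|\FS(x)\|_q \geq \tfrac{2(4-pq)}{(2-p)(2-q)}\log\delta.$$
Next, I observe that on the critical line $1/\tilde p+1/\tilde q=1$ the three regional constants $K$ in the first theorem all collapse to $\delta^{1-2/\tilde p}$, and the two coefficients $1/\tilde p-1/2$ and $1/2-1/\tilde q$ appearing in Proposition \ref{Renyi1} become equal. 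Dividing that proposition by the common factor yields the clean boundary bound
$$h_{\tilde p/2}(|x|^2)+h_{\tilde q/2}(|\FS(x)|^2)\geq \log\delta^2.$$

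I then invoke the log-convexity of $1/r\mapsto\log\|x\|_r$ (a standard consequence of Riesz--Thorin), with the anchors $\log\|x\|_2=0$ (hypothesis) and $\log\|\FS(x)\|_2=0$ (from the $L^2$-isometry of $\FS$). For $(\tilde p,\tilde q)$ chosen on the critical line via
$$\tfrac{1}{\tilde p}-\tfrac{1}{2}=\alpha\bigl(\tfrac{1}{p}-\tfrac{1}{2}\bigr),\qquad \tfrac{1}{\tilde q}-\tfrac{1}{2}=\beta\bigl(\tfrac{1}{q}-\tfrac{1}{2}\bigr),$$
subject to $\alpha(1/p-1/2)+\beta(1/q-1/2)=0$ (which is precisely the line constraint), the convex graph lies above its chord through the anchor, giving $\log\|x\|_{\tilde p}\geq \alpha\log\|x\|_p$ and $\log\|\FS(x)\|_{\tilde q}\geq \beta\log\|\FS(x)\|_q$ on the relevant side. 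Using the identity $\tfrac{2\tilde p}{2-\tilde p}=\tfrac{1}{\alpha}\cdot\tfrac{2p}{2-p}$ and its analogue for $q$, substitution into the boundary bound rearranges to the target, and the constant $\tfrac{2(4-pq)}{(2-p)(2-q)}\log\delta$ comes out by elementary algebra on $\alpha,\beta$.

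The main obstacle is the case analysis. The constraint $\alpha(1/p-1/2)+\beta(1/q-1/2)=0$ forces $\alpha$ and $\beta$ to have opposite signs whenever $1/p-1/2$ and $1/q-1/2$ share a sign, i.e.\ whenever $p,q$ both lie on the same side of $2$; one of the log-convex bounds must then be applied as an extrapolation through the anchor $1/2$, and the direction of each inequality has to be tracked against the signs of the Rényi coefficients $\tfrac{2p}{2-p}$ and $\tfrac{2q}{2-q}$, which flip across $p=2$ and $q=2$. The hypothesis $1/p+1/q\geq 1$ is what guarantees that a valid $(\tilde p,\tilde q)$ with $1/\tilde p+1/\tilde q=1$ exists in a region where the first theorem applies, making the extrapolation admissible in every regime covered by the statement.
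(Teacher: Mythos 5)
Your reduction to the critical-line bound $h_{\tilde p/2}(|x|^2)+h_{\tilde q/2}(|\FS(x)|^2)\ge \log\delta^2$ is fine and agrees with the paper's first step (the paper derives it directly from the Hausdorff--Young inequality rather than from Proposition \ref{Renyi1}, but the content is the same). The gap is in the transfer from $(\tilde p,\tilde q)$ to $(p,q)$. Your transfer device, convexity of $t\mapsto\log\|x\|_{1/t}$ anchored at $\log\|x\|_2=0$, only yields \emph{homogeneous} chord-slope comparisons: since $h_{p/2}(|x|^2)=\tfrac{2p}{2-p}\log\|x\|_p$ is exactly the slope of the chord of that convex function from the anchor $t=1/2$ to $t=1/p$, convexity gives precisely the monotonicity of the R\'enyi entropy in its order, i.e. $h_{p/2}(|x|^2)\ge h_{\tilde p/2}(|x|^2)$ when $1/p\ge 1/\tilde p$, with no additive correction (and, as written, your chord inequality even has the wrong sense in the interpolation regime: a convex graph lies \emph{below} its chord between the endpoints, above it only under extrapolation). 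Feeding homogeneous inequalities of this type into the boundary bound can only transport the constant $\log\delta^2=2\log\delta$ unchanged; there is no mechanism in your argument that can generate the extra additive terms $\bigl(\tfrac{2}{2-p}+\tfrac{2}{2-q}-2\bigr)\log\delta^2$ appearing in the statement, which are strictly positive when $p,q<2$ and $1/p+1/q>1$. Indeed, for $x$ a trace-one projection the left-hand side equals $\log\delta^2$ identically in $(p,q)$, so no combination of the critical-line bound with order-monotonicity of the entropies can ever produce a larger constant. The sentence ``substitution into the boundary bound rearranges to the target, and the constant comes out by elementary algebra'' is exactly the step that fails.

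The paper obtains those correction terms from a different ingredient, Lemma \ref{limits}(1): the monotonicity in $s$ of the shifted quantity $h_s(\cdot)-\tfrac{1}{1-s}\log\delta^2$, whose proof uses $\|x\|_\infty\le 1$ (a consequence of $\|x\|_2=1$) together with $tr_2(1)=\delta^2$; each comparison between an order $s$ and a critical-line order $s_0$ then carries an explicit correction $\bigl(\tfrac{1}{1-s}-\tfrac{1}{1-s_0}\bigr)\log\delta^2$, and the three cases in the proof of Theorem \ref{renyi2} assemble these into the stated constant. Your $L^2$-anchored convexity has no counterpart of this $\delta^2$-dependence, so on its own it proves only $h_{p/2}(|x|^2)+h_{q/2}(|\FS(x)|^2)\ge 2\log\delta$; that weaker bound does imply the stated inequality when $\max(p,q)\ge 2$ (there the stated constant is at most $\log\delta^2$), but it falls strictly short of the claimed constant in the region $p,q<2$, $1/p+1/q>1$. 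To argue along the paper's lines you would have to replace plain convexity by the shifted monotonicity of Lemma \ref{limits} (equivalently, H\"older against $\|1\|_r=\delta^{2/r}$), and then actually carry out the case analysis that your final paragraph only sketches.
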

The Donoho-Stark uncertainty principle and the Hirschman-Beckner uncertainty principle can be recovered as limits of the second R\'{e}nyi entropic uncertainty principles, see Corollary \ref{cor:DSHB}.

\begin{table}
\centering
\begin{tabular}{|c|c|}
\hline
Regions &  Extremizers (up to scale) \\
\hline
$1/p+1/q>1, 1/p>1/2$ &  trace-one projections \\
\hline
$1/p+1/q=1, 1/2<1/p<1$ & bishifts of biprojections\\
\hline
$1/p=1,1/q=0$ & extremal elements\\
\hline
$1/p=1/2, 1/q=1/2$ & $\mathscr{P}_{2,\pm}$\\
\hline
$1/p+1/q<1, 0<1/q<1/2$ & Fourier transform of trace-one projections\\
\hline
$1/q=0, 0\leq 1/p<1$ & extremal unitary elements\\
\hline
$1/q=1/2, 0\leq 1/p<1/2$ & unitary elements\\
\hline
$1/q>1/2,1/p=1/2$ & Fourier transform of unitary elements\\
\hline
$1/q>1/2,1/p<1/2$ & biunitary elements if exist\\
\hline
\end{tabular}
\caption{Characterization of the extremizers}
\label{tab1}
\end{table}

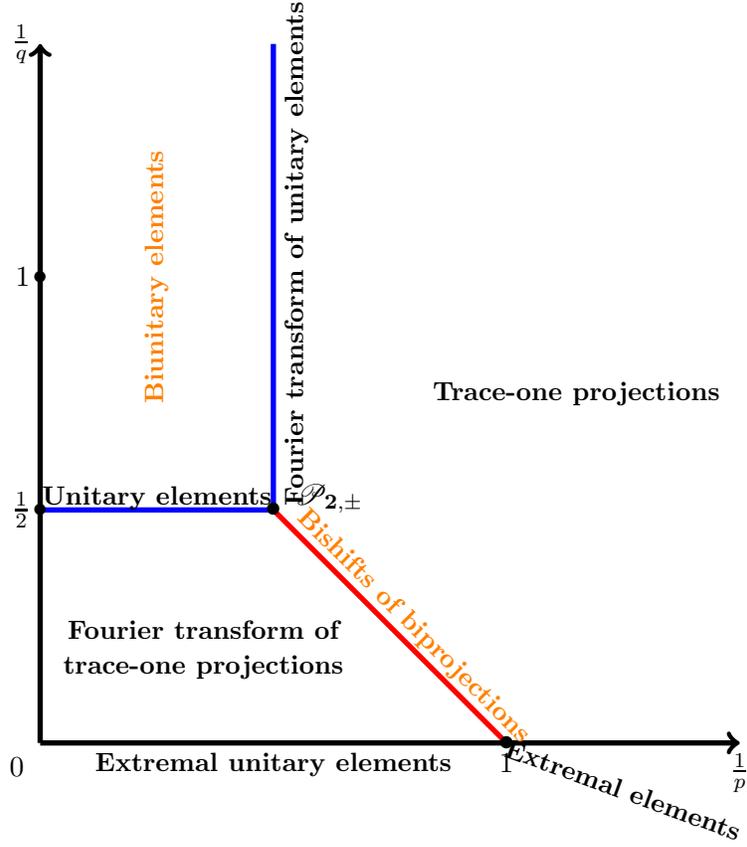
\begin{figure}

\begin{center}
\begin{tikzpicture}[scale=3.1]
\draw [line width=2] [->](0,0)--(0,3);
\node at (-0.1,-0.1) {0};
\node [left] at (0, 3) {$\frac{1}{q}$};
\draw [line width=2]  [->](0,0)--(3,0);
%\node [below] at (1,0) {$\frac{1}{2}$};
\node [below] at (2, 0) {1};
%\node at (2,0) {$\bullet$};
\node [below] at (3,0) {$\frac{1}{p}$};
\draw [blue, line width=2] (1,1)--(0,1);
\node [rotate=-45, color=orange] at (1.6,0.5) {\bf \small Bishifts of biprojections};
\node [left] at (0,1) {$\frac{1}{2}$};
\node at (0,1) {$\bullet$};
\node [left] at (0, 2) {1};
\node at (0,2) {$\bullet$};
\draw [red, line width=2] (1,1)--(2,0);
\draw [blue, line width=2] (1,1)--(1,3);
\node at (2.3,1.5) {\bf \small Trace-one projections};
\node at (1, 1) {\large $\bullet$};
\node [right] at (1.05,1.05) {$\mathbf{\mathscr{P}_{2, \pm}}$};
\node [rotate=90] at (1.1, 2.1) {\bf \small Fourier transform of unitary elements};
\node  at (0.5, 1.05) {\bf \small Unitary elements};
\node (example-align) [align=center] at (0.7, 0.4) {\bf \small Fourier transform of \\ \bf \small trace-one projections};
\node [rotate=90, color=orange] at (0.5, 2) {\bf \small Biunitary elements};
\node at (2,0) {\large $\bullet$};
\node [rotate=-20] at (2.5,-0.2) {\bf \small Extremal elements};
\node [below] at (1,0) {\bf \small Extremal unitary elements};
\end{tikzpicture}
\end{center}
\caption{Extremizers}
\label{Fig:Extremizers}
\end{figure}

We characterize the extremizers of the inequality \eqref{Equ: HYpq} for the three regions, the four critical lines and the two critical points illustrated in Figure~\ref{Fig:SFT norm}, see Table \ref{tab1} for the nine characterizations and Figure~\ref{Fig:Extremizers}. For the special case $1/p+1/q=1$, we recover the Hausdorff-Young inequality and the characterization of the extremizers as bi-shifts of biprojections in \cite{JLW16}. 

If $\mathscr{P}_{\bullet}$ is the planar algebra of a group $G$ crossed product subfactor, then the functions on $G$ are given by the 2-box space $\mathscr{P}_{2,+}$ and the representations of $G$ are characterized by the dual space $\mathscr{P}_{2,-}$. The string Fourier transform $\FS$ coincides with the classical Fourier transform.
In this way, we recover the results of Gilbert and Rzeszotnik on the norm of the Fourier transform on finite abelian groups for $1\leq p,q\leq \infty$ in \cite{GilRzes}. The notions in Table~\ref{tab1} generalize time basis, frequency basis, wave packets, biunimodular functions etc in \cite{GilRzes}. We could not find in the published literature the full results summarized in Table~\ref{tab1} for the case of finite non-abelian groups. 
For a general subfactor, both $\mathscr{P}_{2,+}$ and $\mathscr{P}_{2,-}$ could be highly non-commutative. 
All finite Kac algebras can be realized by the 2-box spaces $\mathscr{P}_{2,\pm}$ of planar algebras \cite{KLS03}.
The first R\'{e}nyi entropic uncertainty principles for locally compact quantum groups \cite{KusVae00} was studied in \cite{JLW18}, the Donoho-Stark uncertainty principles and Hirschman-Beckner uncertainty principles also are obtained. 

This paper is organized as follows. 
In Section 2, we recall some results in \cite{JLW16, JLW} on the Fourier analysis for subfactor planar algebras.
In Section 3, we calculate the norm of the Fourier transform on subfactor planar algebras and find all extremizers.
In Section 4, we prove R\'{e}nyi entropic uncertainty principles for subfactor planar algebras.

\section{Preliminaries}
We refer the reader to \cite{Jon99} for the definition of subfactor planar algebras and keep the notations in \cite{JLW}.
Suppose $\mathscr{P}_{\bullet}=\{\mathscr{P}_{n,\pm}\}_{n\geq 0}$ is a subfactor planar algebra.
Denote by $\delta$ the square root of the Jones index.
The $n$-box space $\mathscr{P}_{n,\pm}$ is a finite dimensional C$^*$-algebra.
Denote by $\mathcal{Z}(\mathscr{P}_{n,\pm})$ the center of the C$^*$-algebra $\mathscr{P}_{n,\pm}$.
Let  $tr_n$ be the (unnormalized) Markov trace on $\mathscr{P}_{n,\pm}$. Denote by $e_1$ the Jones projection in $\mathscr{P}_{2,\pm}$. 
The convolution (or coproduct) of $x, y \in \mathscr{P}_{2,\pm}$ is denoted by $x*y$.
The string Fourier transform $\FS$ (SFT) from $\mathscr{P}_{2,\pm}$ onto $\mathscr{P}_{2,\mp}$ is the clockwise 1-click rotation. The notation $SFT$ was introduced in \cite{JafLiu17} to distinguish from the quantum Fourier transform appeared in quantum information.

For any $x$ in $\mathscr{P}_{2,\pm}$, we denote by $\mathcal{R}(x)$ the range projection of $x$, $\mathcal{S}(x)$ the trace of $\mathcal{R}(x)$, $H(|x|)$ the von Neumann entropy of $|x|$, namely 
$$H(|x|)=-tr_2(|x|\log |x|).$$
A projection $B$ in $\mathscr{P}_{2,\pm}$ is a biprojection if $\FS(B)$ is a multiple of a projection.
A projection $x$ is a left shift of a biprojection $B$ if $tr_2(B)=tr_2(x)$ and $x*B=\frac{tr_2(B)}{\delta}x.$
A projection $x$ is a right shift of a biprojection $B$ if $tr_2(B)=tr_2(x)$ and $B*x=\frac{tr_2(B)}{\delta}x$.
In \cite{JLW16}, it is shown that a left shift of a biprojection is a right shift of a biprojection, where the two biprojections may be different.
A projection $x$ in $\mathscr{P}_{2,\pm}$ is a trace-one projection if $tr_2(x)=1$.
By the results in Proposition 1.9 in \cite{PmPo}, we have that a trace-one projection is a central minimal projection in $\mathscr{P}_{2,\pm}$. 
Moreover, trace-one projections are left shifts of the Jones projection $e_1$.

For a biprojection in $\mathscr{P}_{2,\pm}$, we denote by $\tilde{B}$ the range projection of $\FS(B)$.
A nonzero element $x$ in $\mathscr{P}_{2,\pm}$ is a bi-shift of a biprojection $B$ if there exists a right shift $B_g$ of the biprojection $B$ and a right shift $\tilde{B}_h$ of the biprojection $\tilde{B}$ and an element $y$ in $\mathscr{P}_{2,\pm}$ such that $x=\FS(\tilde{B}_h)*(yB_g)$.
By the results in \cite{JLW16}, there are actually eight forms of a bishift of a biprojection.
A unitary element $u\in\mathscr{P}_{2,\pm}$ is biunitary if $\FS(u)$ is a unitary. 
Biunitary elements generalize biunimodular functions for finite abelian groups.

In Fourier analysis,  the Hausdorff-Young inequality for locally compact abelian groups was studied by Hardy and Littlewood \cite{HarLittle}, Hewitt and Hirschman \cite{HeHir}, Babenko \cite{Babe}, Beckner \cite{Beck}, Russo \cite{Russo} etc.
Their results completely characterize the extremizers of the Hausdorff-Young inequality.
In \cite{JLW16}, C. Jiang and the authors prove Plancherel's formula $\|\FS(x)\|_2=\|x\|_2$ and the Hausdorff-Young inequality for subfactor planar algebras:
$$\|\FS(x)\|_q\leq \delta^{1-2/p}\|x\|_p, \quad x\in\mathscr{P}_{2,\pm},\quad 1/p+1/q=1, 1\leq p\leq 2.$$

An element $x$ in $\mathscr{P}_{2,\pm}$ is extremal if $\|\FS(x)\|_\infty=\delta^{-1}\|x\|_1$.
When $\mathscr{P}_{\bullet}$ is a group subfactor planar algebra arising from a finite abelian group, there is an explicit expression for extremal elements. (See \cite{GilRzes} for examples.)
In general, we have the following characterization:

\begin{proposition}[Corollary 6.12 and Theorem 6.13 in \cite{JLW16}]\label{maxex}
Suppose $\mathscr{P}_{\bullet}$ is an irreducible subfactor planar algebra and $w\in\mathscr{P}_{2,\pm}$.
If $\FS^{-1}(w)$ is extremal, then $wQ$ is a bishift of a biprojection, where $Q$ is the spectral projection of $|w|$  with spectrum $\|w\|_\infty$.
\end{proposition}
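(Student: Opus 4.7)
My plan is to reduce to the bishift-characterization in \cite{JLW16}, which says that an element $z$ of $\mathscr{P}_{2,\mp}$ is a bishift of a biprojection whenever both $|z|$ and $|\FS^{-1}(z)|$ are scalar multiples of projections satisfying the trace-support balance $\mathcal{S}(z)\cdot\mathcal{S}(\FS^{-1}(z))=\delta^2$. I would apply this characterization to $z=wQ$.

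First I would set $x=\FS^{-1}(w)$, so the hypothesis is the endpoint Hausdorff--Young identity $\|w\|_\infty=\delta^{-1}\|x\|_1$. By definition $Q$ is the spectral projection of $|w|$ at its top eigenvalue, hence $|w|Q=\|w\|_\infty Q$; writing $w=u|w|$ in polar form gives $wQ=\|w\|_\infty\, uQ$, so $|wQ|=\|w\|_\infty Q$ is already a scalar multiple of a projection. Half of the bishift criterion comes for free.

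The substantive step is to show that $|\FS^{-1}(wQ)|$ is also a multiple of a projection. I would test against a rank-one element $y_0=\eta\xi^*\in Q\mathscr{P}_{2,\mp}Q$ built from top singular vectors of $w$, so that $\|y_0\|_1=1$ and $|tr_2(wy_0)|=\|w\|_\infty$. Using the Plancherel pairing $tr_2(wy_0)=tr_2\bigl(x\,\FS^{-1}(y_0)\bigr)$, the chain
\[
\|w\|_\infty=|tr_2(wy_0)|=|tr_2(x\,\FS^{-1}(y_0))|\leq\|x\|_1\,\|\FS^{-1}(y_0)\|_\infty\leq\delta^{-1}\|x\|_1=\|w\|_\infty
\]
must collapse to a string of equalities. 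Saturation of the second inequality identifies $y_0$ itself as an endpoint Hausdorff--Young extremizer, forcing $|\FS^{-1}(y_0)|$ to be a multiple of a projection; saturation of the first forces a polar-part alignment between $x$ and $\FS^{-1}(y_0)^*$. Ranging $y_0$ over rank-one elements of $Q\mathscr{P}_{2,\mp}Q$ and combining the resulting alignments shows that $\FS^{-1}(wQ)=\delta^{-1}\,x*\FS^{-1}(Q)$ has modulus proportional to a projection.

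Once both moduli are pinned down, the trace-support balance $\mathcal{S}(wQ)\cdot\mathcal{S}(\FS^{-1}(wQ))=\delta^2$ is forced by Plancherel ($\|wQ\|_2=\|\FS^{-1}(wQ)\|_2$) together with the extremality equation, and the characterization of \cite{JLW16} delivers $wQ$ as a bishift of a biprojection. The main obstacle is the rigidity step: saturation at a single rank-one $y_0$ only constrains $\FS^{-1}(wQ)$ along one direction, so propagating the polar-alignment information to the full cut-down is delicate and likely needs either a carefully varying family of test elements inside $Q\mathscr{P}_{2,\mp}Q$ or an interpolation-back-to-$p\in(1,2)$ argument that invokes the interior extremizer classification from \cite{JLW16}.
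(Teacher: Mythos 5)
You should first be aware that the paper contains no proof of this statement: Proposition \ref{maxex} is imported verbatim from Corollary 6.12 and Theorem 6.13 of \cite{JLW16}, so there is no in-paper argument to compare with and your attempt has to stand on its own. Parts of it do: $|wQ|=\|w\|_\infty Q$ is indeed a multiple of a projection, and your duality chain, once the pairing $tr_2(wy_0)=tr_2(x\,\FS^{-1}(y_0))$ is set up with the correct adjoint/rotation conventions, does force each norming element $y_0$ to saturate both the H\"{o}lder step and the endpoint Hausdorff--Young step, hence to be extremal.

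The argument then breaks at exactly the point you flag, and in a way that is worse than ``delicate.'' The inference ``saturation \dots forcing $|\FS^{-1}(y_0)|$ to be a multiple of a projection'' is unjustified and essentially circular: extremality of $y_0$ is a statement of precisely the type being proved, and the proposition itself only controls the cut-down of an extremal element by its top spectral projection, not the full modulus of its inverse transform; nothing you have written rules out $|\FS^{-1}(y_0)|$ having several distinct eigenvalues. Even granting that step, you concede you cannot propagate alignment information from individual $y_0$'s to the conclusion that $|\FS^{-1}(wQ)|$ is a multiple of a projection, and this is the substantive content of the theorem. Finally, the closing claim that the support balance $\mathcal{S}(wQ)\,\mathcal{S}(\FS^{-1}(wQ))=\delta^2$ is ``forced by Plancherel together with the extremality equation'' does not follow: with $|wQ|=\|w\|_\infty Q$ and $|\FS^{-1}(wQ)|=cP$, Plancherel only gives $\|w\|_\infty^2\,tr_2(Q)=c^2\,tr_2(P)$, and eliminating $c$ requires knowing that the cut-down $wQ$ is itself extremal, i.e. $\|\FS^{-1}(wQ)\|_\infty=\delta^{-1}\|wQ\|_1$, which you have not established. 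Note also that the criterion you invoke is stronger than what \cite{JLW16} provides; Proposition \ref{prop:DS} characterizes bishifts by the support balance alone, but proving that balance for $wQ$ is exactly the missing content, so the reduction does not simplify anything. (Minor points: the formula $\FS^{-1}(wQ)=\delta^{-1}x*\FS^{-1}(Q)$ needs the correct order and normalization of the coproduct, and ``rank-one with $\|y_0\|_1=1$'' requires normalizing by the Markov trace of a minimal projection, which need not be $1$.)
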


\begin{proposition}[Main Theorem 1,2 in \cite{JLW16}]\label{prop:DS}
Suppose $\mathscr{P}_{\bullet}$ is an irreducible subfactor planar algebra.
Then for any $x\in\mathscr{P}_{2, \pm}$, we have 
$$\mathcal{S}(\FS(x))\mathcal{S}(x)\geq \delta^2.$$
Moreover $\mathcal{S}(\FS(x))\mathcal{S}(x)=\delta^2$ if and only if $x$ is a bishift of a biprojection.
\end{proposition}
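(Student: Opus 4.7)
The strategy is to prove the inequality by chaining four standard ingredients---Plancherel's identity, the trivial $L^2$-$L^\infty$ bound in terms of the range projection, the endpoint Hausdorff--Young inequality $\|\FS(x)\|_\infty\leq \delta^{-1}\|x\|_1$, and a Cauchy--Schwarz bound $\|x\|_1\leq \mathcal{S}(x)^{1/2}\|x\|_2$---and then to read off the equality case from Proposition~\ref{maxex}, which characterizes extremizers of the endpoint Hausdorff--Young inequality.

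For the inequality, the operator bound $|x|^2\leq \|x\|_\infty^2\,\mathcal{R}(x)$ gives $\|x\|_2^2\leq \mathcal{S}(x)\|x\|_\infty^2$; applying this to $\FS(x)$ and invoking Plancherel yields $\|x\|_2^2=\|\FS(x)\|_2^2\leq \mathcal{S}(\FS(x))\|\FS(x)\|_\infty^2$. The endpoint Hausdorff--Young inequality bounds $\|\FS(x)\|_\infty$ by $\delta^{-1}\|x\|_1$, and Cauchy--Schwarz $\|x\|_1=tr_2(|x|\mathcal{R}(x))\leq \|x\|_2\,\mathcal{S}(x)^{1/2}$ gives $\|\FS(x)\|_\infty^2\leq \delta^{-2}\mathcal{S}(x)\|x\|_2^2$. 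Substituting and cancelling $\|x\|_2^2$ produces $\mathcal{S}(x)\mathcal{S}(\FS(x))\geq \delta^2$.

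For the equality case, saturating each step of the chain forces $|x|$ to be proportional to $\mathcal{R}(x)$, $|\FS(x)|$ proportional to $\mathcal{R}(\FS(x))$, and $x$ extremal. Writing $w=\FS(x)$, the second condition says the spectral projection $Q$ of $|w|$ at $\|w\|_\infty$ is exactly $\mathcal{R}(\FS(x))$, while the third says $\FS^{-1}(w)=x$ is extremal; Proposition~\ref{maxex} then concludes that $\FS(x)=wQ$ is a bishift of a biprojection. Since the class of bishifts of biprojections is preserved by $\FS^{\pm 1}$ (the eight canonical forms get permuted by the rotations, as noted in \cite{JLW16}), $x$ itself is a bishift of a biprojection. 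Conversely, if $x=\FS(\tilde B_h)*(yB_g)$ is a bishift of a biprojection $B$, one reads directly that $\mathcal{S}(x)=tr_2(B)$ and $\mathcal{S}(\FS(x))=tr_2(\tilde B)$, and the biprojection identity $tr_2(B)\,tr_2(\tilde B)=\delta^2$ gives the equality.

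The main obstacle is the forward direction of the equality case: the saturation conditions by themselves only say that $x$ is a scalar multiple of a partial isometry whose Fourier transform is again a scalar multiple of a partial isometry, considerably weaker than being a bishift of a biprojection. The bridge is Proposition~\ref{maxex}, whose proof in \cite{JLW16} involves a delicate analysis of how extremal elements convolve with their range projections; I would invoke it as a black box. Once that ingredient is granted, the argument collapses to routine bookkeeping of equality cases in the four inequalities above.
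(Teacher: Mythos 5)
This proposition is not proved in the paper at all: it is quoted verbatim as Main Theorems 1 and 2 of \cite{JLW16}, so there is no in-paper argument to compare against. Judged on its own, your reconstruction is essentially the original argument of \cite{JLW16} and is correct in outline. The chain $\|x\|_2^2=\|\FS(x)\|_2^2\leq \mathcal{S}(\FS(x))\|\FS(x)\|_\infty^2\leq \delta^{-2}\mathcal{S}(\FS(x))\|x\|_1^2\leq \delta^{-2}\mathcal{S}(\FS(x))\mathcal{S}(x)\|x\|_2^2$ is exactly the standard Donoho--Stark argument, and saturating it does force $|x|$ and $|\FS(x)|$ to be multiples of projections and $x$ to be extremal, after which Proposition \ref{maxex} (with $w=\FS(x)$, where the relevant spectral projection $Q$ is the support of $|w|$, so $wQ=w$) yields that $\FS(x)$ is a bishift of a biprojection. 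One cosmetic point: what the saturation gives is that $|w|$ is proportional to its own support projection (the right support of $w$), not literally to $\mathcal{R}(\FS(x))$; this does not affect the conclusion since $wQ=w$ either way.

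Be aware, however, that two further ingredients you invoke are not available inside this paper and must also be taken from \cite{JLW16}: (i) the closure of the class of bishifts of biprojections under $\FS^{\pm1}$, needed to pass from ``$\FS(x)$ is a bishift'' to ``$x$ is a bishift''; and (ii) the structural facts behind your converse direction, namely that a bishift of $B$ has range projection of trace $tr_2(B)$, that its SFT is a bishift of $\tilde B$, and that $tr_2(B)\,tr_2(\tilde B)=\delta^2$ --- ``one reads directly'' is too quick without these. (Alternatively, the converse follows from Proposition \ref{Prop: hyoungeq} at $p=1$ together with the fact that $|x|$ and $|\FS(x)|$ are multiples of projections, which again comes from the structure theory in \cite{JLW16}.) Since the statement itself is a citation, black-boxing these is acceptable, but your write-up should flag them as borrowed results rather than immediate observations. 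Also note the inequality as stated requires $x\neq 0$ for the cancellation of $\|x\|_2^2$.
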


C. Jiang and the authors completely characterize the extremizers of the Hausdorff-Young inequality:

\begin{proposition}[Theorem 1.4 in \cite{JLW}]\label{Prop: hyoungeq}
Suppose $\mathscr{P}_{\bullet}$ is an irreducible subfactor planar algebra. 
Let $x$ be nonzero in $\mathscr{P}_{2,\pm}$.
Then the following are equivalent:
\begin{itemize}
\item[(1)] $\|\FS(x)\|_{\frac{p}{p-1}}=\delta^{1-2/p}\|x\|_p$ for some $1<p<2$;
\item[(2)] $\|\FS(x)\|_{\frac{p}{p-1}}=\delta^{1-2/p}\|x\|_p$ for all $1\leq p\leq 2$;
\item[(3)] $x$ is a bi-shift of a biprojection.
\end{itemize}
\end{proposition}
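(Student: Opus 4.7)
The plan is to establish the cycle $(2)\Rightarrow(1)\Rightarrow(3)\Rightarrow(2)$; the implication $(2)\Rightarrow(1)$ is tautological. The substantive work lies in $(3)\Rightarrow(2)$ and, especially, in $(1)\Rightarrow(3)$, which is the main step.

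For $(3)\Rightarrow(2)$, I would compute $\|x\|_p$ and $\|\FS(x)\|_{p/(p-1)}$ directly from the definition $x=\FS(\tilde{B}_h)*(yB_g)$ of a bi-shift. Using that $\FS$ intertwines convolution with multiplication (up to a factor of $\delta$), together with the idempotency-like identities satisfied by shifts of a biprojection, both $|x|^p$ and $|\FS(x)|^{p/(p-1)}$ simplify to constant multiples of the relevant support projections. Their traces then evaluate explicitly in terms of $\mathrm{tr}_2(B)$ and $\mathrm{tr}_2(\tilde{B})$, and matching the two sides gives the Hausdorff--Young equality uniformly in $p\in[1,2]$.

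For $(1)\Rightarrow(3)$, the approach is to reduce HY equality to Donoho--Stark equality and invoke Proposition~\ref{prop:DS}. Starting from $\|\FS(x)\|_{q_0}=\delta^{1-2/p_0}\|x\|_{p_0}$ for some $p_0\in(1,2)$ with $q_0=p_0/(p_0-1)$, I would combine the noncommutative Hölder bounds
\[
\|x\|_{p_0}\leq \|x\|_2\,\mathcal{S}(x)^{1/p_0-1/2},\qquad \|\FS(x)\|_2\leq \|\FS(x)\|_{q_0}\,\mathcal{S}(\FS(x))^{1/2-1/q_0}
\]
with Plancherel's identity $\|x\|_2=\|\FS(x)\|_2$. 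Using $1/2-1/q_0=1/p_0-1/2$, the chain telescopes to
\[
1\leq \delta^{1-2/p_0}\,\bigl[\mathcal{S}(\FS(x))\mathcal{S}(x)\bigr]^{1/p_0-1/2},
\]
which merely recovers $\mathcal{S}(\FS(x))\mathcal{S}(x)\geq \delta^2$. The two Hölder equality conditions say, respectively, that $|x|$ and $|\FS(x)|$ are scalar multiples of their range projections $\mathcal{R}(x)$ and $\mathcal{R}(\FS(x))$. Once both are in force, plugging $\|x\|_r=\lambda\,\mathcal{S}(x)^{1/r}$ and $\|\FS(x)\|_r=\mu\,\mathcal{S}(\FS(x))^{1/r}$ into Plancherel and the assumed HY equality yields the exact identity $\mathcal{S}(\FS(x))\mathcal{S}(x)=\delta^2$; Proposition~\ref{prop:DS} then identifies $x$ as a bi-shift of a biprojection.

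The main obstacle is the rigidity step: forcing those two Hölder bounds to be equalities from HY equality at a single $p_0$. My plan is to exploit that $p\mapsto\log\|x\|_p$ and $q\mapsto\log\|\FS(x)\|_q$ are convex in $1/p$ and $1/q$, respectively, so that the gap function
\[
\Phi(p)=(1-2/p)\log\delta+\log\|x\|_p-\log\|\FS(x)\|_{p/(p-1)}
\]
is nonnegative on $[1,2]$ with $\Phi(2)=0$ by Plancherel and $\Phi(p_0)=0$ by hypothesis. Combined with the HY inequality along the whole segment and the extremality characterization of Proposition~\ref{maxex} at $p=1$, the vanishing of $\Phi$ at two points should propagate to force the spectrum of $|x|$ (and dually of $|\FS(x)|$) to take a single value on its support. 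This is the technically delicate point; once it is in hand, the paragraph above closes the argument.
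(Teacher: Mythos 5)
There is a genuine gap, and it sits exactly where you flag it. First, note that this statement is not proved in the present paper at all: it is imported as Theorem 1.4 of \cite{JLW} (the block-map paper), so the benchmark is that external proof, whose entire difficulty is the implication $(1)\Rightarrow(3)$. Your H\"{o}lder--Plancherel chain gives no leverage on that implication: combining $\|x\|_{p_0}\leq\|x\|_2\,\mathcal{S}(x)^{1/p_0-1/2}$, $\|\FS(x)\|_2\leq\|\FS(x)\|_{q_0}\,\mathcal{S}(\FS(x))^{1/2-1/q_0}$, Plancherel and the assumed Hausdorff--Young equality only reproduces the Donoho--Stark bound $\mathcal{S}(x)\mathcal{S}(\FS(x))\geq\delta^2$, which is true for every nonzero $x$; as far as that chain is concerned, both H\"{o}lder steps may be strict, so nothing forces $|x|$ and $|\FS(x)|$ to be scalar multiples of their range projections. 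The proposed repair via the gap function $\Phi$ does not close this: $\log\|x\|_p$ and $\log\|\FS(x)\|_{p/(p-1)}$ are both convex as functions of $1/p$, so $\Phi$ is an affine term plus a \emph{difference} of convex functions and need not be convex; hence $\Phi(p_0)=\Phi(2)=0$ with $\Phi\geq 0$ does not propagate to an interval, you never reach $p=1$ to invoke Proposition \ref{maxex}, and no spectral rigidity (``single value on the support'') follows. A smaller point: the equality criteria in Proposition \ref{Prop: Holder} are stated only for $r=1$, so even the rigidity you hope to extract would need the general-$r$ equality case; but the real issue is that forcing flatness of $|x|$ and $|\FS(x)|$ from equality at a single interior exponent is essentially the full content of the theorem, so your reduction is close to circular.

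For what it is worth, the pieces surrounding the gap are sound: $(3)\Rightarrow(2)$ does follow by direct computation of the $p$-norms of a bi-shift of a biprojection (as in \cite{JLW16}), and your closing step is valid --- if $|x|=\lambda\,\mathcal{R}(x)$ and $|\FS(x)|=\mu\,\mathcal{R}(\FS(x))$, then Plancherel plus the Hausdorff--Young equality at $p_0$ yields $\mathcal{S}(x)\mathcal{S}(\FS(x))=\delta^2$, and Proposition \ref{prop:DS} identifies $x$ as a bi-shift. But the actual proof of $(1)\Rightarrow(3)$ in \cite{JLW} does not proceed by interpolation or convexity at all: it analyzes the dynamics of block maps (iterates converging to biprojections) to extract the bi-shift structure from equality at one interior exponent. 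That machinery, or some substitute for it, is precisely what your outline is missing.
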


\iffalse
\begin{theorem}[Riesz-Thorin Convexity Theorem]
Let $K$ be a function on $[0,\infty)^2$ defined by $K(1/p,1/q)=C_{p,q}$. 
Then $\log K$ is a convex function on $[0,\infty)^2$.
\end{theorem}

A function $f$ on $[0,\infty)^2$ is affine if $f(x,y)=ax+by+c$ for some $a,b,c\in\mathbb{R}$.
The convex hull $Conv(P)$ of a finite set $P$ is 
$$Conv(P)=\{\sum_{j=1}^l\lambda_jp_j:\lambda_j\geq 0, \sum_{i=1}^l\lambda_j=1, p_j\in P\}.$$

\begin{corollary}\label{convex2}
Suppose that $f$ is an affine function and $\log K(p)\leq f(p)$ for all $p$ in a finite set $P\in [0,\infty)^2$.
Then $\log K(p)\leq f(p)$ for any $p\in Conv(P)$.
\end{corollary}
\fi

%At the end of the section, we recall a fundamental inequality in noncommutative spaces.
The 2-box space $\mathscr{P}_{2,\pm}$ is a direct sum of matrix algebras. The following proposition is a consequence of H\"{o}lder's inequality on matrix algebras.
\begin{proposition}[H\"{o}lder's inequality]\label{Prop: Holder}
Suppose $\mathscr{P}_{\bullet}$ is an irreducible subfactor planar algebra.
Then for any $x,y\in\mathscr{P}_{2,\pm}$, we have
$$\|xy\|_r\leq \|x\|_p\|y\|_q, \quad \frac{1}{r}=\frac{1}{p}+\frac{1}{q},\quad  0<r,p,q\leq \infty.$$
\begin{enumerate}[(1)]
\item If $r=1$, $1<p<\infty$, then we have $\|xy\|_1=\|x\|_p\|y\|_q$ if and only if $\frac{|x|^p}{\|x\|_p^p}=\frac{|y^*|^q}{\|y\|_q^q}$.
\item If $r=1$, $p=\infty$, then $\|xy\|_1=\|x\|_\infty\|y\|_1$ if and only if the spectral projection of $|x|$ corresponding to $\|x\|_\infty$ contains the projection $\mathcal{R}(y)$.
\end{enumerate}
\end{proposition}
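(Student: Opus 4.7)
The plan is to reduce everything to the corresponding classical statements on a finite direct sum of matrix algebras. Since $\mathscr{P}_{2,\pm}$ is a finite-dimensional C$^*$-algebra it decomposes as $\bigoplus_{i=1}^{k} M_{n_i}(\mathbb{C})$, and because $tr_2$ is a faithful positive trace its restriction to the $i$-th simple summand is a positive scalar multiple $w_i \mathrm{Tr}_i$ of the standard matrix trace. Any $x,y\in\mathscr{P}_{2,\pm}$ then split as $x=\bigoplus_i x_i$, $y=\bigoplus_i y_i$ with $xy=\bigoplus_i x_iy_i$, and $\|x\|_p^p = \sum_i w_i \mathrm{Tr}_i(|x_i|^p)$ for $p<\infty$ (with the obvious adaptation for $p=\infty$). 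Under this decomposition the full statement splits cleanly into a matrix H\"older inequality inside each block plus a discrete $\ell^p$-type inequality across blocks.

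For the norm inequality I would invoke the classical matrix H\"older inequality on each block to obtain $\mathrm{Tr}_i(|x_iy_i|^r)^{1/r} \leq \mathrm{Tr}_i(|x_i|^p)^{1/p}\,\mathrm{Tr}_i(|y_i|^q)^{1/q}$ for all admissible $0<p,q,r\leq\infty$ with $1/r=1/p+1/q$, multiply both sides by $w_i^{1/r} = w_i^{1/p+1/q}$, and sum over $i$ using the discrete H\"older inequality with conjugate exponents $p/r$ and $q/r$. This directly yields $\|xy\|_r \leq \|x\|_p\|y\|_q$.

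For the equality characterizations with $r=1$, I would examine when each step of the two-level reduction is tight. In case (1) with $1<p<\infty$, matrix H\"older equality in block $i$ is the classical condition $|x_i|^p = \lambda_i |y_i^*|^q$ for some scalar $\lambda_i\geq 0$ (derived via polar decomposition $x_i = u_i|x_i|$, $y_i = v_i|y_i|$ together with Young's inequality $ab \leq a^p/p + b^q/q$), and equality in the discrete H\"older step forces all the $\lambda_i$'s to coincide; after rescaling to unit $p$-norm and $q$-norm this is precisely $|x|^p/\|x\|_p^p = |y^*|^q/\|y\|_q^q$. In case (2) with $p=\infty$, the block-wise inequality $\|x_iy_i\|_1 \leq \|x_i\|_\infty \|y_i\|_1$ becomes an equality iff $|x_i|$ acts as the scalar $\|x_i\|_\infty$ on $\mathrm{range}(y_i)$, and the discrete step forces $\|x_i\|_\infty = \|x\|_\infty$ on every block where $y_i\neq 0$, which globally is exactly the statement that $\mathcal{R}(y)$ lies under the spectral projection of $|x|$ at its top eigenvalue.

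The main obstacle is the bookkeeping in case (1): the block-wise proportionality constants $\lambda_i$ must be shown to be globally consistent, so rather than combining two separate equality analyses it is cleanest to run Young's inequality once globally, applied pointwise on the joint spectral data of $|x|$ and $|y^*|$ (using the appropriate polar decompositions and a single unitary diagonalization in each block). Equality in the pointwise Young inequality then automatically produces a single constant of proportionality, simultaneously encoding both the block-wise matrix H\"older equality and the cross-block discrete H\"older equality.
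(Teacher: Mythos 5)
The paper itself gives essentially no argument for this proposition: it observes that $\mathscr{P}_{2,\pm}$ is a finite direct sum of matrix algebras and quotes H\"older's inequality (with its known equality cases) for matrix algebras. So your two-level reduction --- matrix H\"older inside each simple summand, weighted discrete H\"older across summands --- is exactly the reduction the paper has in mind, and your treatment of the norm inequality and of case (2) is correct. Moreover, the cross-block bookkeeping you single out as the main obstacle in case (1) is actually harmless: equality in the discrete H\"older step gives $w_i\mathrm{Tr}_i(|x_i|^p)=c\,w_i\mathrm{Tr}_i(|y_i|^q)$ with the \emph{same} constant $c=\|x\|_p^p/\|y\|_q^q$ in every block, so a block where one factor vanishes is a block where both vanish, and on the remaining blocks the proportionality constants $\lambda_i$ are automatically pinned to $c$.

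The genuine gap is inside each block, in the step you treat as classical, and it is not repaired by your proposed ``global'' fix. You want to derive $|x_i|^p=\lambda_i|y_i^*|^q$ from equality in $\|x_iy_i\|_1\le\|x_i\|_p\|y_i\|_q$ by running the scalar Young inequality ``pointwise on the joint spectral data of $|x|$ and $|y^*|$ using a single unitary diagonalization in each block''. But $|x|$ and $|y^*|$ need not commute, so no simultaneous diagonalization is available a priori; their commutation is part of the \emph{conclusion} (it follows from $|x|^p\propto|y^*|^q$), so positing a common eigenbasis is circular. A correct route is genuinely noncommutative: first reduce via the two polar decompositions to positive operators, $\|x_iy_i\|_1=\bigl\| |x_i|\,|y_i^*| \bigr\|_1$ (this is also what brings $|y^*|$ rather than $|y|$ into the statement), then for $a=|x_i|$, $b=|y_i^*|$ use von Neumann's trace inequality $\mathrm{Tr}\,|ab|\le\sum_j s_j(a)s_j(b)$ (singular values in decreasing order) followed by discrete H\"older on the $s_j$'s; equality forces $s_j(a)^p=\lambda\, s_j(b)^q$ for all $j$ together with the equality case of von Neumann's inequality, which yields a common orthonormal eigenbasis ordering both decreasingly, whence $a^p=\lambda b^q$. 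Alternatively, simply quote the known equality characterization for tracial noncommutative H\"older, which is what the paper implicitly does. Without one of these inputs, part (1) of your argument does not go through as written.
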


\begin{definition}
Suppose $\mathscr{P}_{\bullet}$ is a subfactor planar algebra.
For any $0<p,q\leq \infty$, the norm $C_{p,q}$ of $\FS$ on $\mathscr{P}_{2,\pm}$ is defined to be
$$C_{p,q}=\sup_{\|x\|_p=1}\|\FS(x)\|_q.$$
\end{definition}
Proposition \ref{Prop: hyoungeq} shows that 
 $C_{p,q}=\delta^{1-2/p},$
when $1/p+1/q=1$ and $1\leq p\leq 2$.
We will compute $C_{p,q}$ for $0 < p,q \leq \infty $ in \S \ref{Sec: norm}.
We refer the readers to \cite{JLW16, JLW, LW17, PisXu} for other interesting inequalities on subfactor planar algebras and on non-commutative $L^p$ spaces. For example, Young's inequality has been established for subfactor planar algebras in \cite{JLW16}: 
$$\|x*y\|_r \leq \delta^{-1} \|x\|_p \|y\|_q, ~1/r+1=1/p+1/q, 1\leq p,q,r\leq \infty.$$
It would be interesting to compute
$$C_{p,q,r}:=\sup_{\|x\|_p=1, \|y\|_q=1}\|x*y\|_r,$$
for general parameters $p,q,r$.
%It will be interesting to prove the inequality 

\section{The Norm of the Fourier Transform}\label{Sec: norm}

In this section, we calculate the norm $C_{p,q}$ of the SFT. We will deal with three different cases corresponding to the three regions in Fig.~\ref{Fig:SFT norm}. Precisely
\begin{eqnarray*}
R_F:&=&\{(1/p,1/q)\in [0,\infty]^2: 1/p+1/q\leq 1,1/q\leq 1/2\},\\
R_T:&=&\{(1/p,1/q)\in [0,\infty]^2: 1/p+1/q\geq  1,1/p\geq 1/2\},\\
R_{TF}:&=&\{(1/p,1/q)\in [0,\infty]^2: 1/p\leq 1/2, 1/q\geq 1/2\}.
\end{eqnarray*}

\begin{remark}
In the finite abelian group case \cite{GilRzes}, the regions $R_F$, $R_T$, $R_{TF}$ correspond to the frequency basis, the time basis, the time-frequency basis respectively.
%We use the same notations here.
\end{remark}

Let $K$ be a function on $[0,\infty)^2$ defined in Equation \eqref{K}.
Then $\log K$ is an affine function in each of the three regions.

\iffalse
\begin{lemma}\label{traceone}
Suppose $\mathscr{P}_{\bullet}$ is an irreducible subfactor planar algebra.
Let $x,y$ be trace-one projections and $x\neq y$.
Then $x,y$ are left shift of the Jones' projection $e_1$, $x,y\in \mathcal{Z}(\mathscr{P}_{2,\pm})$ and $xy=0$.
\end{lemma}
\begin{proof}
Since $x*e_1=\delta^{-1}x$, $y*e_1=\delta^{-1}y$ and $tr_2(y)=tr_2(x)=tr_2(e_1)=1$, we have that $x$, $y$ are left shifts of the Jones' projection.
By Wenzl's formula \cite{JLW16}, we have that $x,y\in\mathcal{Z}(\mathscr{P}_{2,\pm})$ and $tr_2(xy)=tr_2(xy)^2$.
Hence $tr_2(xy)=0, 1$. 
If $tr_2(xy)=1$, we have that $x=y$. 
Therefore $xy=0$.
\end{proof}

\fi
\begin{proposition}\label{RF}
Suppose $\mathscr{P}_{\bullet}$ is an irreducible subfactor planar algebra. If $(1/p,1/q)\in R_{F}$, then $C_{p,q}=\delta^{1-2/p}.$
Moreover, the following statements are equivalent:
\begin{itemize}
\item[(1)] $\|\FS(x)\|_q=\delta^{1-2/p}\|x\|_p$, for some $p> 0,q> 0$ with $1/p+1/q< 1$, $0<1/q< 1/2$;
\item[(2)] $\|\FS(x)\|_q=\delta^{1-2/p}\|x\|_p$, for all $p> 0,q> 0$ with $1/p+1/q\leq 1$, $1/q\leq 1/2$;
\item[(3)] $\FS(x)$ is a multiple of a trace-one projection.
\end{itemize}
\end{proposition}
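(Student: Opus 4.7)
My plan is to prove the three claims of Proposition \ref{RF} in order: the sharp upper bound $C_{p,q} = \delta^{1-2/p}$ on $R_F$, the implication $(3) \Rightarrow (2)$, and finally $(1) \Rightarrow (3)$, with $(2) \Rightarrow (1)$ being immediate.

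For the upper bound I would chain four facts already in the preliminaries: Plancherel $\|\FS(x)\|_2 = \|x\|_2$, the Hausdorff--Young inequality $\|\FS(x)\|_{p'} \leq \delta^{1-2/p}\|x\|_p$ for $1 \leq p \leq 2$ from Proposition \ref{Prop: hyoungeq}, the bound $\|\FS(x)\|_\infty \leq \delta^{-1}\|x\|_1$, and the H\"{o}lder estimates $\|x\|_1 \leq \delta^{2-2/p}\|x\|_p$ and (for $p \geq 2$) $\|x\|_2 \leq \delta^{1-2/p}\|x\|_p$ obtained from Proposition \ref{Prop: Holder} applied to $x \cdot \mathbf{1}$. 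When $p, q \geq 2$, I would apply the $L^q$ interpolation $\|y\|_q \leq \|y\|_\infty^{1-2/q}\|y\|_2^{2/q}$ to $y = \FS(x)$ and combine with Plancherel and the two H\"{o}lder estimates; the $\delta$-exponents telescope to give $\|\FS(x)\|_q \leq \delta^{1-2/p}\|x\|_p$. When $1 \leq p \leq 2$, I would first derive $\|\FS(x)\|_\infty \leq \delta^{1-2/p}\|x\|_p$ by composing the $L^1 \to L^\infty$ bound with H\"{o}lder, then interpolate against Hausdorff--Young at exponent $p'$ via $\|y\|_q \leq \|y\|_{p'}^{p'/q}\|y\|_\infty^{1-p'/q}$; since both endpoint bounds equal $\delta^{1-2/p}\|x\|_p$, the interpolated bound does too, covering the full range $q \geq p'$ allowed by $R_F$.

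For tightness and $(3) \Rightarrow (2)$, fix a trace-one projection $e \in \mathscr{P}_{2,\mp}$ and set $x = \FS^{-1}(e)$. I claim $|x| = \delta^{-1}\mathbf{1}$: Plancherel gives $\|x\|_2 = 1$, while the $L^1 \to L^\infty$ bound applied to $\FS^{-1}$ gives $\|x\|_\infty \leq \delta^{-1}$; the chain $1 = \|x\|_2^2 \leq \delta^2 \|x\|_\infty^2 \leq 1$ is therefore tight throughout, forcing $|x|$ to be constant. Thus $\|x\|_p = \delta^{2/p-1}$ and $\|\FS(\alpha x)\|_q / \|\alpha x\|_p = \delta^{1-2/p}$ for every $(p,q) \in R_F$ and every scalar $\alpha$.

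For $(1) \Rightarrow (3)$, I would trace back the equality cases of the chain. In the interior of $R_F$ the H\"{o}lder interpolation in $q$ (against $\|\cdot\|_2$ or $\|\cdot\|_{p'}$) is tight iff $|\FS(x)|$ takes only the values $0$ and $\|\FS(x)\|_\infty$, so $|\FS(x)| = c' P$ for a projection $P$. The H\"{o}lder equality clauses of Proposition \ref{Prop: Holder} force $|x|$ to be a scalar multiple of $\mathbf{1}$, yielding $\mathcal{S}(x) = \delta^2$; the step $\|\FS(x)\|_\infty = \delta^{-1}\|x\|_1$ says that $x$ is extremal. Proposition \ref{maxex} applied to this extremal $x$ shows that $\FS(x)\cdot P$ is a bishift of a biprojection, but $P = \mathcal{R}(\FS(x))$, so $\FS(x)P = \FS(x)$ and $\FS(x)$ itself is a bishift. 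Proposition \ref{prop:DS} then upgrades Donoho--Stark to the equality $\mathcal{S}(\FS(x)) \cdot \delta^2 = \delta^2$, so $tr_2(P) = 1$ and $P$ is a trace-one projection. The final step, which is where I expect the main technical content, is to pass from $|\FS(x)| = c'P$ with $P$ trace-one to $\FS(x) = \alpha P$: by Proposition 1.9 of \cite{PmPo} the trace-one projection $P$ is central minimal, and its corner $P\mathscr{P}_{2,\mp}P$ is one-dimensional, which forces the partial isometry in the polar decomposition of $\FS(x)$ to be a scalar multiple of $P$. Checking this corner-dimension assertion is the delicate point that turns the extremal/bishift information into the cleaner statement that $\FS(x)$ is a multiple of a trace-one projection.
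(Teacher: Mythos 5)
Your proposal is correct, and it uses the same toolbox as the paper (Plancherel, Hausdorff--Young, H\"{o}lder against the identity, the Donoho--Stark equality case, and the fact that trace-one projections are central minimal), but it arranges the argument differently at three points. For the upper bound the paper uses a single two-step chain: Hausdorff--Young at the dual pair $(q',q)$ followed by H\"{o}lder $\|x\|_{q'}\leq\|x\|_p\|1\|_{pq'/(p-q')}$; you instead interpolate in $q$ between the $L^2$ (or $L^{p'}$) and $L^\infty$ endpoint bounds --- both are valid and cover all of $R_F$. For $(3)\Rightarrow(2)$ the paper argues through the bishift property of $\FS(x)$ and Proposition \ref{Prop: hyoungeq} (which directly gives $\|x\|_p=\delta^{2/p-1}$ only for $1\leq p\leq 2$), whereas your rigidity computation $1=\|x\|_2^2\leq\delta^2\|x\|_\infty^2\leq 1$ forcing $|x|=\delta^{-1}1$ is cleaner and gives all exponents at once; it also immediately yields $C_{p,q}\geq\delta^{1-2/p}$. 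For $(1)\Rightarrow(3)$ the paper extracts the bishift property of $x$ from the Hausdorff--Young equality case (Proposition \ref{Prop: hyoungeq}, using $2<q<\infty$), while you reach the bishift property of $\FS(x)$ through extremality and Proposition \ref{maxex}; this is a genuinely different key lemma, and it has the advantage of treating your $p\geq 2$ branch (where your chain passes through Plancherel and no Hausdorff--Young equality is available) uniformly. Two steps are glossed but harmless and at the same level of detail as the paper's own proof: the H\"{o}lder equality clause of Proposition \ref{Prop: Holder} is stated only for $r=1$, so for your $p\geq 2$ branch you should reduce $\|x\|_2=\|x\|_p\|1\|_{2p/(p-2)}$ to the $r=1$ case by applying it to $|x|^2$ (the paper makes the same silent extension); and when you apply Proposition \ref{prop:DS} to the bishift $\FS(x)$ you implicitly use that $\mathcal{S}(\FS^2(x))=\mathcal{S}(x)$ (or that the bishift class is $\FS$-invariant), which is standard from \cite{JLW16}. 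Your final corner argument, $P\mathscr{P}_{2,\mp}P=\mathbb{C}P$ for a trace-one (hence central minimal) projection $P$, is exactly what the paper's terse ``Hence $\FS(x)$ is a multiple of a trace-one projection'' also requires, so flagging and supplying it is a point in your favor rather than a gap.
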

\begin{proof}
\iffalse
We have to show that for $1/p+1/q\leq 1, 1/q\leq 1/2$, $x\in\mathscr{P}_{2,\pm}$,
$$\|\FS(x)\|_q\leq \delta^{1-2/p}\|x\|_p.$$
By Corollary \ref{convex2} , we only have to check the inequality above at the points $(0,0)$, $(1,0)$, $(1/2,1/2)$, $(0,1/2)$.

For the point $(0,0)$, we have 
$$\|\FS(x)\|_\infty\leq \frac{1}{\delta}\|x\|_1\leq \delta\|x\|_\infty,$$
i.e. $C_{\infty,\infty}\leq \delta=K(0,0)$.

For the point $(1,0)$, we have
$$\|\FS(x)\|_\infty\leq \frac{1}{\delta}\|x\|_1, $$ 
i.e. $C_{1,\infty}\leq \frac{1}{\delta}=K(1,0)$.

For the point $(1/2,1/2)$, we have $\|\FS(x)\|_2=\|x\|_2$, i.e. $C_{2,2}=1=K(1/2,1/2)$.

For the point $(0,1/2)$, we have 
$$\|\FS(x)\|_2=\|x\|_2\leq \delta\|x\|_\infty,$$
i.e. $C_{\infty,2}\leq \delta=K(0,1/2)$.
Therefore,
$$\|\FS(x)\|_q\leq \delta^{1-2/p}\|x\|_p.$$

\fi

Let $q'$ be a real number such that $1/q+1/q'=1$.
Then $1/p<1-1/q=1/q'$.
By Propositions \ref{Prop: hyoungeq}, \ref{Prop: Holder},
\begin{equation}\label{min1}
\begin{aligned}
\|\FS(x)\|_q&\leq \delta^{1-2/q'}\|x\|_{q'}\\
&\leq \delta^{1-2/q'}\|x\|_p\|1\|_{\frac{pq'}{p-q'}}\\
&=\delta^{1-2/q'}\|x\|_p\delta^{2(1/q'-1/p)}\\
&=\delta^{1-2/p}\|x\|_p.
\end{aligned}
\end{equation}

(1)$\Rightarrow$(3):
Since Inequality (\ref{min1}) becomes an equality, we have that
\begin{align}
\|\FS(x)\|_q&=\delta^{1-2/q'}\|x\|_{q'}, \label{Equ: 1} \\
\quad \|x\|_{q'}&=\|x\|_p\|1\|_{\frac{pq'}{p-q'}}. \label{Equ: 2}
\end{align}
Note that $2<q<\infty$, applying Proposition \ref{Prop: hyoungeq} to Equation (\ref{Equ: 1}), we see that $x$ is a bishift of a biprojection.
Applying Proposition \ref{Prop: Holder} to Equation (\ref{Equ: 2}), we see that $|x|$ is a multiple of $1$.
By Proposition \ref{prop:DS}, we have  $\mathcal{S}(\FS(x))=\frac{\delta^2}{\mathcal{S}(x)}=1$ and $\mathcal{R}(\FS(x))$ is a trace-one projection.
Hence $\FS(x)$ is a multiple of a trace-one projection.

(3)$\Rightarrow$(2): 
Suppose $\FS(x)$ is a trace-one projection.
Then $\|\FS(x)\|_q=1$ and $1\leq p\leq 2$, $x$ is a bishift of a biprojection as $\FS(x)$, so by Proposition 2.3 (2) $\|x\|_p=\delta^{2/p-1}$.
Hence $\|\FS(x)\|_q=\delta^{1-2/p}\|x\|_p$.
This also indicates that $C_{p,q}=\delta^{1-2/p}.$

(2)$\Rightarrow$(1): It is obvious.
\end{proof}

\begin{remark}
In the proof of Proposition \ref{RF}, when the inequality becomes equality and $q=\infty$, we have that 
$$\|\FS(x)\|_\infty=\delta^{-1}\|x\|_1, \quad \|x\|_1=\|x\|_p\|1\|_{\frac{p}{p-1}}, \quad p\neq 1.$$
We obtain that $x$ is a multiple of an extremal unitary element.
Note that if $\mathscr{P}_{\bullet}$ is a group subfactor planar algebra raising from a finite abelian group, then $x$ 
is a multiple of a character.
%is an extremal unitary element if and only if $\FS(x)$ is a multiple of a left shift of the Jones' projection $e_1$.
\end{remark}

\begin{remark} 
For the finite abelian group case, the extremizers form a basis which is a frequency basis.
But for the noncommutative case, the extremal unitaries do not form a basis in general.
\end{remark}

\begin{proposition}\label{RTF}
Suppose $\mathscr{P}_{\bullet}$ is an irreducible subfactor planar algebra and $(1/p,1/q)\in R_{TF}$. Then $C_{p,q}\leq \delta^{2/q-2/p}.$
Moreover, if there exists a biunitary in $\mathscr{P}_{2,\pm}$, the following statements are equivalent:
\begin{itemize}
\item[(1)]$\|\FS(x)\|_q=\delta^{2/q-2/p}\|x\|_p$ for some $p>0, q> 0$ with $1/p<1/2$, $1/q>1/2$;
\item[(2)]$\|\FS(x)\|_q=\delta^{2/q-2/p}\|x\|_p$ for all $p>0, q>0$ with $1/p\leq 1/2$, $1/q\geq 1/2$;
%\item[(3)] $H(|x|^2)+H(|\FS(x)|^2)=2\delta\log\delta$ and $\|x\|_2=\delta$;
\item[(3)] $x$ is a multiple of a biunitary.
\end{itemize}
If there is a biunitary in $\mathscr{P}_{2,\pm}$, we have that $C_{p,q}=\delta^{2/q-2/p}$.
\end{proposition}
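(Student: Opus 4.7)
\emph{Proof plan.} The strategy is to sandwich $\FS$ between the $L^2$-norm using two H\"older inequalities, with Plancherel's formula $\|\FS(x)\|_2=\|x\|_2$ as the bridge. Since $(1/p,1/q)\in R_{TF}$ forces $p\geq 2$ and $q\leq 2$, Proposition~\ref{Prop: Holder} gives
\begin{equation*}
\|x\|_2 \;\leq\; \|x\|_p\,\|1\|_{\frac{2p}{p-2}} \;=\; \delta^{1-2/p}\|x\|_p,
\qquad
\|\FS(x)\|_q \;\leq\; \|\FS(x)\|_2\,\|1\|_{\frac{2q}{2-q}} \;=\; \delta^{2/q-1}\|\FS(x)\|_2.
\end{equation*}
Composing these with Plancherel yields $\|\FS(x)\|_q \leq \delta^{2/q-2/p}\|x\|_p$, i.e.\ $C_{p,q}\leq \delta^{2/q-2/p}$.

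For the equivalence, (2)$\Rightarrow$(1) is immediate. The implication (3)$\Rightarrow$(2) is a direct computation: if $u$ is biunitary then $|u|^2=1$ and $|\FS(u)|^2=1$, hence $\|u\|_p=\delta^{2/p}$ and $\|\FS(u)\|_q=\delta^{2/q}$, giving $\|\FS(u)\|_q=\delta^{2/q-2/p}\|u\|_p$ throughout the closed region; in particular $C_{p,q}=\delta^{2/q-2/p}$ whenever a biunitary exists. The main content is (1)$\Rightarrow$(3). The strict hypotheses $1/p<1/2$ and $1/q>1/2$ put us in the range $p>2$, $q<2$, so equality in the two-step bound forces equality in each H\"older step separately. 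To access the equality case I would recast each step as a H\"older inequality with product exponent $r=1$; for instance
\begin{equation*}
\|x\|_2^2 \;=\; tr_2\bigl(|x|^2\cdot 1\bigr) \;\leq\; \bigl\|\,|x|^2\,\bigr\|_{p/2}\,\|1\|_{p/(p-2)}.
\end{equation*}
The equality clause~(1) of Proposition~\ref{Prop: Holder} then forces $|x|^p$, hence $|x|$, to be a scalar multiple of $1$, so $x$ itself is a multiple of a unitary. Running the same argument on $\FS(x)$ shows $\FS(x)$ is a multiple of a unitary, whence $x$ is a multiple of a biunitary.

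The main subtlety is that Proposition~\ref{Prop: Holder}(1) characterizes equality in H\"older only when the product exponent equals $1$, so one cannot read it off directly from $\|x\|_2\leq \|x\|_p\|1\|_s$; the trick is to apply it to $|x|^2\cdot 1$, whose $1$-norm equals $\|x\|_2^2$. The boundary cases $p=\infty$ (still inside $1/p<1/2$) and $q\to 0^+$ fall under clause~(2) of the same proposition, where the equality condition identifies $|x|^2$ with $\|x\|_\infty^2\cdot 1$ and yields the same conclusion. The strictness $1/p<1/2$, $1/q>1/2$ is however essential: on the lines $p=2$ or $q=2$ the corresponding H\"older step becomes vacuous and carries no equality information, which is why (1) is stated over the open region while (2) extends to its closure via the (3)$\Rightarrow$(2) computation.
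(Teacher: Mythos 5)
Your proposal is correct and follows essentially the same route as the paper: the same two-step H\"older--Plancherel--H\"older chain $\|\FS(x)\|_q\leq\delta^{2/q-1}\|x\|_2\leq\delta^{2/q-2/p}\|x\|_p$ for the upper bound, the same direct computation for (3)$\Rightarrow$(2), and the same equality analysis for (1)$\Rightarrow$(3). The only difference is that you spell out how the $r=1$ equality clause of Proposition~\ref{Prop: Holder} (applied to $|x|^2\cdot 1$ and $|\FS(x)|^q\cdot 1$) forces $|x|$ and $|\FS(x)|$ to be multiples of $1$, a step the paper asserts without detail; this is a faithful filling-in rather than a different argument.
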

\begin{proof}
\iffalse
By Corollary \ref{convex2}, we will check the inequality at the points $(0,1/2)$, $(1/2,1/2)$, $(0,1/q_0)$, $(1/2,1/q_0)$ for any $q_0<2$.
Let $x\in\mathscr{P}_{2,\pm}.$

For the point $(0,1/q_0)$, we have 
$$\|\FS(x)\|_{q_0}\leq \|1\|_{\frac{2q_0}{2-q_0}} \|\FS(x)\|_2=\delta^{2/q_0-1}\|x\|_2\leq \delta^{2/q_0}\|x\|_\infty,$$
i.e. $C_{\infty,q_0}\leq \delta^{2/q_0}=K(0,1/q_0)$.

For the point $(0,1/2)$, we have 
$$\|\FS(x)\|_2=\|x\|_2\leq \delta\|x\|_\infty,$$
i.e. $C_{\infty,2}\leq \delta=K(0,1/2)$.

For the point $(1/2,1/q_0)$, we have
$$\|\FS(x)\|_{q_0}\leq \delta^{2/q_0-1}\|\FS(x)\|_2=\delta^{2/q_0-1}\|x\|_2,$$
i.e. $C_{2,q_0}\leq \delta^{2/q_0-1}=K(1/2,1/q_0)$.

For the point $(1/2,1/2)$, we have $C_{2,2}=1=K(1/2,1/2)$.

Therefore we can obtain the inequality
$$\|\FS(x)\|_q\leq \delta^{2/q-2/p}\|x\|_p,\quad \forall x\in\mathscr{P}_{2,\pm}.$$
\fi

For any $1/q\geq 1/2$ and $1/p\leq 1/2$, we have
\begin{equation}\label{min2}
\begin{aligned}
\|\FS(x)\|_q&\leq \|\FS(x)\|_2\|1\|_{\frac{2q}{2-q}}\\
&=\delta^{2/q-1}\|x\|_2\\
&\leq \delta^{2/q-1}\|x\|_p\|1\|_{\frac{2p}{p-2}}\\
&=\delta^{2/q-2/p}\|x\|_p.
\end{aligned}
\end{equation}

(1)$\Rightarrow$(3):
Since Inequality (\ref{min2}) becomes equality, we have $\|\FS(x)\|_q=\|\FS(x)\|_2\|1\|_{\frac{2q}{2-q}}$ and $\|x\|_2=\|x\|_p\|1\|_{\frac{2p}{p-2}}$.
Therefore $x$ is a multiple of a biunitary.

(3)$\Rightarrow$(2): Suppose $x$ is a biunitary in $\mathscr{P}_{2,\pm}$.
Then $\|x\|_p=\delta^{2/p}$ and $\|\FS(x)\|_q=\delta^{2/q}$.
Hence $\|\FS(x)\|_q=\delta^{2/q-2/p}\|x\|_p$.
This indicates that $C_{p,q}=\delta^{2/q-2/p}$ if there exists a biunitary in $\mathscr{P}_{2,\pm}$.

(2)$\Rightarrow$(1): It is obvious.
\end{proof}

\begin{remark}
By \cite{GilRzes} if a function generates a time-frequency basis the it is unimodular (i.e. biunitary), but the converse is false. Now for the finite abelian group case, by Theorem 4.7 in \cite{GilRzes} there always exists a function which generates a time-frequency basis.
In general, there might not exist a biunitary element.
\end{remark}

\begin{remark}
Suppose $\mathscr{P}_{\bullet}=\mathscr{P}^{\mathbb{Z}_n}$ is the group subfactor planar algebra arising from the group $\mathbb{Z}_n$.
It is shown in Theorem 4.5 in \cite{GilRzes} that  $u\in\mathscr{P}_{2,\pm}$ generates a time-frequency basis if and only if
$$u(k)=\exp\left(\frac{2\pi i}{n}\left(\lambda k^2+\mu k\right)\right), \quad k\in\mathbb{Z}_n, n \text{ odd },$$
$$u(k)=\exp\left(\frac{2\pi i}{n}\left(\frac{\lambda}{2} k^2+\mu k\right)\right), \quad k\in\mathbb{Z}_n, n \text{ even },$$
where $\lambda,\mu\in\mathbb{Z}_n$ and $\lambda$ relatively prime to $n$. 
\end{remark}

\begin{remark}
Suppose $TL(\delta)$ is the Temperley-Lieb subfactor planar algebra.
Then $x\in TL(\delta)$ is a biunitary element if and only if 
$$x=1-e_1+\left(1-\frac{\delta^2}{2}\pm i\frac{\delta\sqrt{4-\delta^2}}{2}\right)e_1,\quad \delta\leq 2.$$
\end{remark}

\begin{lemma}\label{norm1}
Suppose $\mathscr{P}_{\bullet}$ is an irreducible subfactor planar algebra.
Let $x\in\mathscr{P}_{2,\pm}$.
Then for $0<p\leq 1$, we have
$$\delta^{2-2/p}\|x\|_p \leq \|x\|_1\leq \|x\|_p;$$
for $1\leq p\leq \infty$, we have
$$\|x\|_p\leq \|x\|_1\leq \delta^{2-2/p}\|x\|_p.$$
Moreover,  $\|x\|_p=\|x\|_1$, $p\neq 1$ if and only if $x$ is a multiple of a trace-one projection.
\end{lemma}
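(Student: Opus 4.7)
The proof breaks into establishing the four inequalities and then analyzing equality. For the bounds containing the factor $\delta^{2-2/p}$, I would apply H\"older's inequality (Proposition~\ref{Prop: Holder}) directly. When $1 \le p \le \infty$, writing $\|x\|_1 = \|x \cdot 1\|_1$ and using the conjugate exponent $p' = p/(p-1)$ yields
$$\|x\|_1 \;\le\; \|x\|_p\,\|1\|_{p'} \;=\; \delta^{2/p'}\|x\|_p \;=\; \delta^{2-2/p}\|x\|_p.$$
When $0 < p \le 1$, applying H\"older to $\|x\|_p^p = tr_2(|x|^p\cdot 1)$ with the two exponents $1/p \ge 1$ and $1/(1-p)$ gives $\|x\|_p^p \le \|x\|_1^p\,\delta^{2(1-p)}$, which rearranges to $\delta^{2-2/p}\|x\|_p \le \|x\|_1$.

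For the two remaining (tight) bounds I would use the functional calculus of $|x|$. When $p \ge 1$, the pointwise operator inequality $|x|^p \le \|x\|_\infty^{p-1}|x|$ combined with $\|x\|_\infty \le \|x\|_1$ (valid since the block weights of $\mathscr{P}_{2,\pm}$ are at least $1$) yields $\|x\|_p^p \le \|x\|_\infty^{p-1}\|x\|_1 \le \|x\|_1^p$. When $0 < p \le 1$, the subadditivity of $t \mapsto t^p$ applied to the eigenvalues of $|x|$, together with the elementary fact that $w^p \le w$ for block weights $w \ge 1$ and $p \le 1$, gives $\|x\|_1^p \le \|x\|_p^p$.

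For the equality statement with $p \ne 1$: since $\delta^{2-2/p} > 1$ whenever $\delta > 1$, the $\delta$-factor bound is strict unless $x = 0$, so equality $\|x\|_p = \|x\|_1$ is attained in the tight direction. Tracing back, equality in $|x|^p \le \|x\|_\infty^{p-1}|x|$ (for $p > 1$), respectively in the subadditivity bound (for $0 < p < 1$), forces the spectrum of $|x|$ to be $\{0,\|x\|_\infty\}$, so $|x| = \|x\|_\infty\,Q$ for a projection $Q$. Comparing $\|x\|_1 = \|x\|_\infty\,tr_2(Q)$ with $\|x\|_p = \|x\|_\infty\,tr_2(Q)^{1/p}$ then forces $tr_2(Q) = 1$, so $Q$ is a trace-one projection. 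By the Pimsner--Popa result recalled in the preliminaries, such $Q$ is central minimal; combined with the block-weight condition this pins the ambient matrix block to be one-dimensional. Consequently the polar decomposition $x = u|x|$ with $u^*u = Q$ lies in $Q\mathscr{P}_{2,\pm} = \mathbb{C}Q$, giving $u = \mu Q$ with $|\mu|=1$ and hence $x = (\|x\|_\infty\mu)\,Q$, a scalar multiple of a trace-one projection. The converse is immediate: if $x = \lambda Q$ with $tr_2(Q)=1$, then $\|x\|_r = |\lambda|$ for every $r>0$.

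\textbf{Main obstacle.} The step I expect to be most delicate is the final lift in the equality case, from ``$|x|$ is a positive multiple of a trace-one projection'' to ``$x$ itself is a scalar multiple of a trace-one projection.'' A purely spectral argument only controls $|x|$, so the polar part $u$ a priori could be any unitary supported on $Q$; the strengthening relies on the one-dimensionality of the central summand containing $Q$, which in turn depends on combining the Pimsner--Popa central-minimality statement with the fact that the quantum-dimension block weights are $\ge 1$.
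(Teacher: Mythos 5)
Your proof is correct and follows essentially the same route as the paper: a spectral decomposition of $|x|$ combined with the Pimsner--Popa fact that nonzero projections in $\mathscr{P}_{2,\pm}$ have Markov trace at least $1$, plus H\"older's inequality for the $\delta$-weighted bounds. You actually spell out more than the paper's one-line proof does, in particular the passage in the equality case from ``$|x|$ is a multiple of a trace-one projection'' to ``$x$ itself is,'' via the centrality and minimality of trace-one projections, which the paper leaves implicit.
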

\begin{proof}
It is enough to prove for the case $x=|x|$.
Suppose that $|x|=\sum_k\lambda_k f_k\neq 0$, where $\{f_k\}_k$ is an orthogonal family of projections such that $\sum_k f_k=1$ and $\lambda_k\geq 0$.
Since $tr_2(f_k)\geq1$, we obtain the desired inequalities.
\end{proof}

\begin{proposition}\label{RT}
Suppose $\mathscr{P}_{\bullet}$ is an irreducible subfactor planar algebra and $(1/p,1/q)\in R_{T}$. 
Then $C_{p,q}= \delta^{2/q-1}.$
Moreover, the following statements are equivalent:
\begin{itemize}
\item[(1)] $\|\FS(x)\|_q=\delta^{2/q-1}\|x\|_p$, for some $p>0,q>0$ with $1/p+1/q> 1$, $1/p> 1/2$;
\item[(2)] $\|\FS(x)\|_q=\delta^{2/q-1}\|x\|_p$, for any $p> 0,q> 0$ with $1/p+1/q\geq 1$, $1/p\geq 1/2$;
\item[(3)] $x$ is a multiple of a trace-one projection.
\end{itemize}
\end{proposition}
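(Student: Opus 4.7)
My plan is to prove the sharp inequality $\|\FS(x)\|_q\leq\delta^{2/q-1}\|x\|_p$ on $R_T$ by composing Hausdorff--Young with a H\"older-type estimate, and then to characterize equality by tracing the chain backwards. The implication (2)$\Rightarrow$(1) is trivial; I focus on the inequality itself, together with (3)$\Rightarrow$(2) and (1)$\Rightarrow$(3).

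\textbf{Inequality and sharpness.} In $R_T$ we have $p\leq 2$, and (whenever $p>1$) the conjugate exponent $p'=p/(p-1)$ satisfies $q\leq p'$. For $1\leq p\leq 2$, Proposition~\ref{Prop: hyoungeq} gives $\|\FS(x)\|_{p'}\leq \delta^{1-2/p}\|x\|_p$, and Proposition~\ref{Prop: Holder} against the constant $1$ yields
\[
\|\FS(x)\|_q\leq \|\FS(x)\|_{p'}\|1\|_r = \delta^{2/q-2/p'}\|\FS(x)\|_{p'},\qquad \tfrac{1}{r}=\tfrac{1}{q}-\tfrac{1}{p'}.
\]
Multiplying collapses the exponents to $\delta^{2/q-1}$. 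For $0<p<1$, where Hausdorff--Young at $p$ is unavailable, I would instead chain Lemma~\ref{norm1} ($\|x\|_1\leq\|x\|_p$), the $p=1$ Hausdorff--Young bound $\|\FS(x)\|_\infty\leq\delta^{-1}\|x\|_1$, and the elementary estimate $\|\FS(x)\|_q\leq\delta^{2/q}\|\FS(x)\|_\infty$. Sharpness, together with (3)$\Rightarrow$(2), follows from a direct computation on a trace-one projection $p_0$: $\|p_0\|_r=1$ for every $r$, Plancherel gives $\|\FS(p_0)\|_2=1$, Proposition~\ref{prop:DS} (since $p_0$ is a bishift of $e_1$) gives $\mathcal{S}(\FS(p_0))=\delta^2$, and extremality $\|\FS(p_0)\|_\infty=\delta^{-1}$ combined with $\|\FS(p_0)\|_2=\delta\|\FS(p_0)\|_\infty$ pins down $|\FS(p_0)|=\delta^{-1}$, so $\|\FS(p_0)\|_q=\delta^{2/q-1}$.

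\textbf{(1)$\Rightarrow$(3).} I trace equality backwards through the chain. H\"older equality against the constant $1$ forces $|\FS(x)|$ to be a scalar multiple of the identity, so in particular $\mathcal{S}(\FS(x))=\delta^2$. The Hausdorff--Young equality (for $1<p\leq 2$) forces $x$ to be a bishift of a biprojection by Proposition~\ref{Prop: hyoungeq}; for $p\leq 1$ the same conclusion follows from extremality $\|\FS(x)\|_\infty=\delta^{-1}\|x\|_1$ together with Proposition~\ref{maxex} (applied with spectral projection $Q=1$) and the equality case of Lemma~\ref{norm1}. Either way Proposition~\ref{prop:DS} then yields $\mathcal{S}(x)\mathcal{S}(\FS(x))=\delta^2$, so $\mathcal{S}(x)=1$ and $\mathcal{R}(x)$ is some trace-one projection $p_0$.

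\textbf{Main obstacle.} The subtlest step is upgrading $\mathcal{R}(x)=p_0$ to $x\in\mathbb{C}p_0$. The essential input is that in an \emph{irreducible} subfactor planar algebra a trace-one projection is actually a \emph{minimal} projection of $\mathscr{P}_{2,\pm}$, not merely a minimal central projection --- a standard consequence of the basic construction $e_1\mathscr{P}_{2,\pm}e_1\cong\mathbb{C}$ (using $\mathscr{P}_{1,\pm}=\mathbb{C}$), a property inherited by all shifts of $e_1$. Hence $p_0\mathscr{P}_{2,\pm}=\mathbb{C}p_0$, and since $p_0$ is central we conclude $x=p_0xp_0\in\mathbb{C}p_0$, completing the proof.
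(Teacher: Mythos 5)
Your proof is correct and follows essentially the same route as the paper's: the upper bound is Hausdorff--Young (Proposition \ref{Prop: hyoungeq}) composed with H\"older against the identity, equality is traced back through Propositions \ref{Prop: Holder}, \ref{maxex}, \ref{prop:DS} and Lemma \ref{norm1}, and sharpness comes from evaluating on a trace-one projection. The differences are cosmetic: you treat $0<p<1$ by a single chain through $\|\FS(x)\|_\infty$ where the paper splits into two subcases, and you make explicit the final step $\mathcal{R}(x)=p_0\Rightarrow x\in\mathbb{C}p_0$ (which is indeed valid, most directly because every nonzero projection in $\mathscr{P}_{2,\pm}$ has trace at least $1$, the same fact underlying Lemma \ref{norm1}, so a trace-one projection is minimal and central), a step the paper delegates to the argument of Proposition \ref{RF}.
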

\begin{proof}
\iffalse
By Corollary \ref{convex2}, we only have to check the inequality at the point $(1/2,1/2)$, $(1/2,1/q_0)$, $(1/p_0,1/q_0)$, $(1/p_0,0)$ for $1/p_0$, $1/q_0$ large enough.

For the point $(1/2,1/2)$, we have $C_{2,2}=1=K(1/2,1/2)$.

For the point $(1/2,1/q_0)$, $q_0<1$,
$$\|\FS(x)\|_{q_0}\leq \|\FS(x)\|_2\|1\|_{\frac{2q_0}{2-q_0}}=\delta^{2/q_0-1}\|x\|_2,$$
i.e. $C(2,q_0)\leq \delta^{2/q_0-1}=K(1/2,1/q_0)$.

For the point $(1/p_0, 1/q_0)$, $p_0<1$, $q_0<1$, by Lemma \ref{norm1}, we have
$$\|\FS(x)\|_{q_0}\leq \delta^{2/q_0-1}\|x\|_2\leq \delta^{2/q_0-1}\|x\|_1\leq \delta^{2/q_0-1}\|x\|_{p_0},$$
i.e. $C(p_0,q_0)\leq \delta^{2/q_0-1}=K(1/p_0,1/q_0)$.

For the point $(1/p_0,0)$, $p_0<1$, by Lemma \ref{norm1}, we have
$$\|\FS(x)\|_\infty\leq \delta^{-1}\|x\|_1\leq \delta^{-1}\|x\|_{p_0},$$
i.e. $C_{p_0,\infty}\leq \delta^{-1}=K(1/p_0,0)$.

Therefore we have
$$\|\FS(x)\|_q\leq \delta^{2/q-1}\|x\|_p.$$

\fi

If $1/p\leq 1$, let $p'$ be such that $1/p+1/p'=1$, then we have $1/q\geq 1-1/p=1/p'$ and
\begin{equation}\label{min3}
\begin{aligned}
\|\FS(x)\|_q&\leq \|\FS(x)\|_{p'}\|1\|_{\frac{p'q}{p'-q}}\\
&\leq\delta^{2/q-2/p'} \delta^{2/p'-1}\|x\|_p\\
&=\delta^{2/q-1}\|x\|_p.
\end{aligned}
\end{equation}
If $1/p>1$, $1/q\leq 1/2$, let $q'$ be such that $1/q+1/q'=1$, then by Lemma \ref{norm1} we have
\begin{equation}\label{min4}
\begin{aligned}
\|\FS(x)\|_q&\leq\delta^{2/q-1}\|x\|_{q'} \\
&\leq  \delta^{2/q-1}\|x\|_1\\
&\leq \delta^{2/q-1}\|x\|_p.
\end{aligned}
\end{equation}
If $1/p>1$, $1/q>1/2$, then by Lemma \ref{norm1} we have
\begin{equation}\label{min5}
\begin{aligned}
\|\FS(x)\|_q&\leq \|\FS(x)\|_2\|1\|_{\frac{2q}{2-q}}\\
&=\delta^{2/q-1}\|x\|_2\\
&\leq \delta^{2/q-1}\|x\|_1\\
&\leq \delta^{2/q-1}\|x\|_p.
\end{aligned}
\end{equation}

When Inequality (\ref{min3}) becomes equality, we have
\begin{align}\label{eq:ki1}
\|\FS(x)\|_q=\|\FS(x)\|_{p'}\|1\|_{\frac{p'q}{p'-q}},
\end{align}
\begin{align}\label{eq:ki2}
 \|\FS(x)\|_{p'}=\delta^{2/p'-1}\|x\|_p.
 \end{align}
When $p\neq 1$, by Proposition \ref{Prop: hyoungeq} and Equation \ref{eq:ki2}, we see that $x$ is a bishift of a biprojection.
By Equation \ref{eq:ki1} and Proposition \ref{Prop: Holder}, we have $|\FS(x)|$ is a multiple of $1$.
Then $x$ is a left shift of the Jones projection $e_1$ from the argument in Proposition \ref{RF}.
When $p=1$, we see that $x$ is extremal and $\FS(x)$ is a multiple of a unitary element.
By Proposition \ref{maxex}, we see that $\FS(x)$ is a bishift of a biprojection.
Hence $x$ is a multiple of a left shift of the Jones' projection, i.e. $x$ is a multiple of a trace-one projection.

When inequality (\ref{min4}) becomes equality, we have 
$$\|\FS(x)\|_q=\delta^{2/q-1}\|x\|_{q'},\quad \|x\|_1=\|x\|_p.$$
Then by Lemma \ref{norm1}, we obtain that $x$ is a multiple of a trace-one projection.

When inequality $(\ref{min5})$ becomes equality, we have
$$\|\FS(x)\|_q=\|\FS(x)\|_2\|1\|_{\frac{2q}{2-q}},\quad \|x\|_1=\|x\|_p=\|x\|_2.$$
Then by Lemma \ref{norm1}, we obtain that $x$ is a multiple of a trace-one projection.
\end{proof}

\begin{remark}
For the finite abelian group case, the extremizers form a basis which is a time basis.
But for the noncommutative case, the extremizers do not form a basis in general.
\end{remark}

\begin{proposition}
Suppose $\mathscr{P}_{\bullet}$ is an irreducible subfactor planar algebra.
Then 
\begin{itemize}
\item[(1)] $\|\FS(x)\|_2=\delta^{1-2/p}\|x\|_p$, $1/p<1/2$ if and only if $x$ is a multiple of a unitary element;
\item[(2)] $\|\FS(x)\|_q=\delta^{2/q-1}\|x\|_2$, $1/q>1/2$ if and only if $\FS(x)$ is a multiple of a unitary element.
\end{itemize}
\end{proposition}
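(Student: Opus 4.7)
The plan is to reduce both statements to Plancherel's theorem combined with the equality case of H\"{o}lder's inequality (Proposition \ref{Prop: Holder}(1)). The upper bounds are already implicit in the proof of Proposition \ref{RTF}; what remains is to identify the equality case on the two boundary edges $\{1/q=1/2\}$ and $\{1/p=1/2\}$ of the region $R_{TF}$, which is precisely where the interpolation estimate used there collapses into a single H\"{o}lder step.

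For (1), with $p>2$, I would write $\|x\|_2^2=tr_2(|x|^2\cdot 1)$ and apply H\"{o}lder with conjugate exponents $p/2$ and $p/(p-2)$ to obtain
\begin{equation*}
\|x\|_2^2\leq \||x|^2\|_{p/2}\,\|1\|_{p/(p-2)}=\|x\|_p^{2}\,\delta^{2(p-2)/p}.
\end{equation*}
Taking square roots and using Plancherel gives $\|\FS(x)\|_2\leq \delta^{1-2/p}\|x\|_p$. Equality then forces the H\"{o}lder equality condition applied to the pair $(|x|^2,1)$, which, after simplifying the exponents, reads $|x|^p/\|x\|_p^p=1/\delta^2$. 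Hence $|x|$ is a scalar multiple of the identity, so $x$ is a multiple of a unitary. Conversely, if $x=\lambda u$ with $u$ unitary, then $|x|=|\lambda|\cdot 1$, so $\|x\|_p=|\lambda|\delta^{2/p}$ and $\|\FS(x)\|_2=\|x\|_2=|\lambda|\delta$, which verifies the identity directly.

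For (2), set $y=\FS(x)$ so that $\|y\|_2=\|x\|_2$ by Plancherel. For $q<2$ apply H\"{o}lder to $tr_2(|y|^q\cdot 1)$ with exponents $2/q$ and $2/(2-q)$:
\begin{equation*}
\|y\|_q^q=tr_2(|y|^q)\leq \||y|^q\|_{2/q}\,\|1\|_{2/(2-q)}=\|y\|_2^{q}\,\delta^{2-q}.
\end{equation*}
Taking $q$-th roots yields $\|\FS(x)\|_q\leq \delta^{2/q-1}\|x\|_2$. The equality case of Proposition \ref{Prop: Holder}(1) applied to $(|y|^q,1)$ requires $|y|^2/\|y\|_2^2=1/\delta^2$, so $|y|$ is a scalar multiple of $1$ and $\FS(x)$ is a multiple of a unitary. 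The converse direction is again a direct computation.

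The main subtlety will be bookkeeping the exponent conditions in the equality clause of Proposition \ref{Prop: Holder}(1) after the substitutions $a=|x|^2$ and $a=|y|^q$, and in particular checking that the conclusion $|x|\in\mathbb{C}\cdot 1$ is indeed equivalent (in the finite-dimensional C$^*$-algebra $\mathscr{P}_{2,\pm}$) to $x$ being a scalar multiple of a unitary; this follows from polar decomposition since $|x|$ invertible and scalar forces the partial isometry in the polar decomposition to extend to a unitary.
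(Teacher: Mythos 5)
Your proof is correct and takes essentially the same route as the paper: Plancherel's formula plus a single H\"{o}lder estimate against the identity, with equality forcing $|x|$ (resp.\ $|\FS(x)|$) to be a scalar multiple of $1$ and hence $x$ (resp.\ $\FS(x)$) a multiple of a unitary; the paper applies H\"{o}lder directly as $\|x\cdot 1\|_2\leq \|x\|_p\|1\|_{\frac{2p}{p-2}}$, while you apply it to $tr_2(|x|^2\cdot 1)$, which is the same estimate and has the small advantage of matching the stated equality clause of Proposition \ref{Prop: Holder}(1). The only loose end is the endpoint $p=\infty$ in (1) (allowed by $1/p<1/2$), where the conjugate exponent is $1$ and you should invoke Proposition \ref{Prop: Holder}(2) instead, which gives the same conclusion since $\mathcal{R}(1)=1$ forces $|x|$ to be the constant $\|x\|_\infty$ times the identity.
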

\begin{proof}
Since
\begin{eqnarray*}
\|\FS(x)\|_2=\|x\|_2
\leq \|x\|_p\|1\|_{\frac{2p}{p-2}}
=\delta^{1-2/p}\|x\|_p,
\end{eqnarray*}
we see that $\|\FS(x)\|_2=\delta^{1-2/p}\|x\|_p$, $1/p<1/2$ if and only if $x$ is a multiple of a unitary element.

Similarly, we have that $\|\FS(x)\|_q=\delta^{2/q-1}\|x\|_2$, $1/q>1/2$ if and only if $\FS(x)$ is a multiple of a unitary element.
\end{proof}

\begin{theorem}\label{Fourier}
Suppose $\mathscr{P}_{\bullet}$ is an irreducible subfactor planar algebra.
Then for any $0<p,q\leq \infty$ we have
$$K(1/q,1/p)^{-1}\|x\|_p\leq \|\FS(x)\|_q\leq K(1/p,1/q)\|x\|_p.$$
\end{theorem}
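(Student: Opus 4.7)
The plan is to separate the two sides of the inequality. For the upper bound $\|\FS(x)\|_q \le K(1/p,1/q)\|x\|_p$, essentially everything is already in hand: Propositions \ref{RF}, \ref{RT}, and \ref{RTF} prove exactly this inequality for $(1/p,1/q)$ in $R_F$, $R_T$, and $R_{TF}$ respectively, and these three regions cover all of $[0,\infty]^2$ (cf.\ Figure \ref{Fig:SFT norm}). So the upper bound follows by simply combining them.

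For the lower bound $K(1/q,1/p)^{-1}\|x\|_p \le \|\FS(x)\|_q$, I would reduce to the upper bound applied to the inverse string Fourier transform. The map $\FS^{-1}:\mathscr{P}_{2,\mp}\to\mathscr{P}_{2,\pm}$, being the counterclockwise $1$-click rotation, plays the same structural role in the shading-reversed planar algebra that $\FS$ plays in $\mathscr{P}_\bullet$. Since reversing shading yields another irreducible subfactor planar algebra with the same index $\delta^2$, Propositions \ref{RF}, \ref{RT}, \ref{RTF} are available there as well, giving
$$\|\FS^{-1}(y)\|_p\le K(1/q,1/p)\|y\|_q,\qquad y\in \mathscr{P}_{2,\mp},\ 0<p,q\le\infty.$$
Substituting $y=\FS(x)$, so that $\FS^{-1}(y)=x$, and rearranging yields the desired lower bound.

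The only point deserving real attention is the bootstrap to $\FS^{-1}$: one must check that the ingredients in the proofs of Propositions \ref{RF}, \ref{RT}, \ref{RTF} — Plancherel's formula, the Hausdorff--Young inequality of Proposition \ref{Prop: hyoungeq}, H\"older's inequality of Proposition \ref{Prop: Holder}, and the norm comparison in Lemma \ref{norm1} — all transfer verbatim to the shading-reversed setting. Plancherel and H\"older are intrinsic to the finite-dimensional C$^*$-algebras $\mathscr{P}_{2,\pm}$, hence immediate; the Hausdorff--Young input is Proposition \ref{Prop: hyoungeq} for the shading-reversed subfactor planar algebra, which has the same index. Once this symmetry is recorded, the lower half is a one-line substitution.
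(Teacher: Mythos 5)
Your proposal is correct and follows essentially the same route as the paper: the upper bound is the union of Propositions \ref{RF}, \ref{RT}, \ref{RTF} over the three regions, and the lower bound comes from writing $\|x\|_p=\|\FS^{-1}\FS(x)\|_p\leq K(1/q,1/p)\|\FS(x)\|_q$, i.e.\ applying the upper bound to the inverse transform. Your extra care in checking that the bound transfers to $\FS^{-1}$ (via the shading-reversed planar algebra, where the same propositions hold with the same $\delta$) only makes explicit a point the paper leaves implicit.
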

\begin{proof}
Note that 
$$\|x\|_p=\|\FS^{-1}\FS(x)\|_p\leq K(1/q,1/p)\|\FS(x)\|_q,$$
we obtain 
$$K(1/q,1/p)^{-1}\|x\|_p\leq \|\FS(x)\|_q\leq K(1/p,1/q)\|x\|_p.$$
\end{proof}

Now the extremizers of the Fourier transform can be summarized as shown in Table \ref{tab1}.

\section{R\'{e}nyi entropy uncertainty principles}
In this section, we will show R\'{e}nyi entropic uncertainty principles for subfactor planar algebras.
First, we present the definition of R\'{e}nyi entropy for subfactor planar algebras $\mathscr{P}_{\bullet}$.
For $p\in (0,1)\cup (1,\infty)$, we define the R\'{e}nyi entropy of order $p$ of $x$ in $\mathscr{P}_{2,\pm}$ by
$$h_p(x)=\frac{p}{1-p}\log \|x\|_p.$$

\begin{proposition}[R\'{e}nyi entropic uncertainty principle]\label{Renyi1}
Suppose $\mathscr{P}_{\bullet}$ is an irreducible subfactor planar algebra.
Then for any nonzero $x\in\mathscr{P}_{2,\pm}$, we have
$$(1/p-1/2)h_{p/2}(|x|^2)+(1/2-1/q)h_{q/2}(|\FS(x)|^2)\geq  -\log K(1/p, 1/q).$$
\end{proposition}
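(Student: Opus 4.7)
The plan is to reduce the entropic inequality directly to the operator-norm bound $\|\FS(x)\|_q \leq K(1/p,1/q)\|x\|_p$ from Theorem \ref{Fourier}, via a short algebraic identity. The key observation is that for any $r\in(0,\infty)$ and any $x\in\mathscr{P}_{2,\pm}$,
\begin{equation*}
\||x|^2\|_r = \bigl(tr_2(|x|^{2r})\bigr)^{1/r} = \|x\|_{2r}^2;
\end{equation*}
setting $r = p/2$ gives $\||x|^2\|_{p/2} = \|x\|_p^2$, and the analogous statement holds with $\FS(x)$ and $q$ in place of $x$ and $p$.

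Using this together with the definition $h_r(y) = \frac{r}{1-r}\log\|y\|_r$, I would compute
\begin{equation*}
h_{p/2}(|x|^2) = \frac{p/2}{1-p/2}\log\|x\|_p^2 = \frac{2p}{2-p}\log\|x\|_p,
\end{equation*}
and then multiply by $(1/p - 1/2) = (2-p)/(2p)$; the factors telescope, yielding the clean identity
\begin{equation*}
(1/p - 1/2)\,h_{p/2}(|x|^2) = \log\|x\|_p.
\end{equation*}
The same calculation, together with the sign flip $(1/2-1/q) = -(1/q-1/2)$, gives $(1/2 - 1/q)\,h_{q/2}(|\FS(x)|^2) = -\log\|\FS(x)\|_q$. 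Adding these two identities, the left-hand side of the proposition collapses to $\log\|x\|_p - \log\|\FS(x)\|_q$, and Theorem \ref{Fourier} supplies exactly $\log\|x\|_p - \log\|\FS(x)\|_q \geq -\log K(1/p,1/q)$ after taking the logarithm of the norm inequality and rearranging. As a sanity check, this reformulation is manifestly invariant under $x\mapsto \lambda x$, matching the fact that the proposition requires only $x\neq 0$ and no normalization.

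The only point that needs care is the boundary $p = 2$ or $q = 2$, where $h_{p/2}$ or $h_{q/2}$ is not literally defined because the prefactor $r/(1-r)$ diverges. At these values the outer weight $(1/p-1/2)$ or $(1/2-1/q)$ vanishes, so the offending product should be interpreted as its limit (which equals $0$ whenever $\|x\|_p$ is finite), and the inequality reduces to the Hausdorff--Young-type one-sided bound in the remaining variable. Apart from this interpretive detail there is no real obstacle: the argument is essentially a one-line reduction to Theorem \ref{Fourier}, so the genuine work has already been carried out in the preceding section's norm computation.
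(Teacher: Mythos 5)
Your proposal is correct and is essentially the paper's own argument: both proofs use the identity $(1/p-1/2)h_{p/2}(|x|^2)=\log\|x\|_p$ (and its analogue with the sign flip for $\FS(x)$) to collapse the left-hand side to $\log\|x\|_p-\log\|\FS(x)\|_q$, and then invoke the norm bound $\|\FS(x)\|_q\leq K(1/p,1/q)\|x\|_p$; the paper simply carries this out region by region with the explicit form of $K$, whereas you cite Theorem \ref{Fourier} in one stroke.
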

\begin{proof}
Since for any $(1/p,1/q)\in R_F$, we have
$$\|\FS(x)\|_q\leq \delta^{1-2/p}\|x\|_p,$$
i.e.
$$\log\|\FS(x)\|_q\leq (1-2/p)\log\delta+\log \|x\|_p.$$
\begin{eqnarray*}
\lefteqn{(1/p-1/2)h_{p/2}(|x|^2)+(1/2-1/q)h_{q/2}(|\FS(x)|^2)}\\
&=&(\frac{1}{p}-\frac{1}{2})\frac{p/2}{1-p/2}\log\||x|^2\|_{p/2}+(\frac{1}{2}-\frac{1}{q})\frac{q/2}{1-q/2}\log\||\FS(x)|^2\|_{q/2}\\
&=&\log \|x\|_p-\log\|\FS(x)\|_q\\
&\geq & -(1-2/p)\log\delta.
\end{eqnarray*}
The rest of the proposition can be obtained similarly.
\end{proof}

\begin{remark}
The minimizers of the R\'{e}nyi entropic uncertainty principles in Proposition \ref{Renyi1} are the same as the extremizers for the inequalities in Theorem \ref{Fourier} given in Table \ref{tab1}.
\end{remark}

\begin{lemma}\label{limits}
Suppose $\mathscr{P}_{\bullet}$ is an irreducible subfactor planar algebra.
Let $x\in\mathscr{P}_{2,\pm}$ such that $\|x\|\leq 1$.
Then 
\begin{enumerate}
\item $h_p(x)-\frac{1}{1-p}\log\delta^2$ is a decreasing function with respect to $p$ for $p\in (0,1)\cup (1,\infty)$.
\item $\displaystyle \lim_{p\to 1}\left(h_p(x)-\frac{1}{1-p}\log \|x\|_1\right)=-\frac{tr_2(|x|\log |x|)}{\|x\|_1},$
\item $\displaystyle \lim_{p\to 0} h_p(x)=\log\mathcal{S}(x).$
\end{enumerate}
\end{lemma}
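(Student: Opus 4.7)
The plan is to handle all three parts with a common spectral setup. Let $a = |x| \geq 0$, so $\|a\| = \|x\| \leq 1$, and let $\tau = tr_2/\delta^2$ be the normalized Markov trace, which satisfies $\tau(1) = 1$ because $tr_2(1) = \delta^2$. Set $M(p) := \tau(a^p) = \delta^{-2} tr_2(a^p)$, so that the quantity in part (1) rewrites cleanly as $f(p) := h_p(x) - (1-p)^{-1}\log \delta^2 = \log M(p)/(1-p)$.

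For parts (2) and (3) I would argue by direct spectral calculus. For part (3), diagonalize $a = \sum_i \lambda_i e_i$ with $\lambda_i \geq 0$. Since $\lambda_i^p \to 1$ as $p \to 0^+$ whenever $\lambda_i > 0$, the sum $tr_2(a^p) = \sum_i \lambda_i^p\, tr_2(e_i)$ converges to $\sum_{\lambda_i > 0} tr_2(e_i) = tr_2(\mathcal{R}(x)) = \mathcal{S}(x)$, and combined with $(1-p)^{-1} \to 1$ this yields $\lim_{p \to 0^+} h_p(x) = \log \mathcal{S}(x)$. For part (2), I would rewrite the quantity as a difference quotient: with $g(p) := \log tr_2(a^p)$, we have $h_p(x) - (1-p)^{-1}\log\|x\|_1 = (g(p) - g(1))/(1-p)$. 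Since $g'(p) = tr_2(a^p \log a)/tr_2(a^p)$ by functional calculus, L'H\^opital gives the limit $-g'(1) = -tr_2(|x|\log|x|)/\|x\|_1$.

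The heart of the lemma is part (1). I would differentiate $f(p) = \log M(p)/(1-p)$ and factor the derivative as $f'(p) = (1-p)^{-2}\psi(p)$, where $\psi(p) := \log M(p) + (1-p)(\log M)'(p)$. A short calculation gives $\psi'(p) = (1-p)(\log M)''(p)$. The key ingredient is the log-convexity of $M$ in $p$, namely Lyapunov's inequality $M(sp_0 + (1-s)p_1) \leq M(p_0)^s M(p_1)^{1-s}$, which gives $(\log M)'' \geq 0$. So $\psi$ is increasing on $(0,1)$, decreasing on $(1,\infty)$, and attains its maximum at $p = 1$, where $\psi(1) = \log\tau(a)$. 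Because $\|x\| \leq 1$ gives $a \leq 1$, hence $\tau(a) \leq \tau(1) = 1$, we conclude $\psi(1) \leq 0$. Thus $\psi \leq 0$ everywhere, so $f'(p) \leq 0$ on $(0,1)\cup(1,\infty)$ and $f$ is decreasing on each of the two intervals.

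The only substantive obstacle is the sign analysis in part (1): the denominator $(1-p)$ of $f$ flips sign at $p = 1$, so monotonicity cannot be read off from a naive estimate on $f'$ alone. The role of $\psi$ is to absorb the sign flip, reducing everything to log-convexity of $p \mapsto \tau(a^p)$ together with the hypothesis $\|x\| \leq 1$ evaluated at the maximizing point $p = 1$.
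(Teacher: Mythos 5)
Your proof is correct, and parts (2) and (3) coincide with the paper's own argument (the difference-quotient computation of $-\frac{d}{dp}\log tr_2(|x|^p)$ at $p=1$, and $tr_2(|x|^p)\to tr_2(\mathcal{R}(x))=\mathcal{S}(x)$ as $p\to 0^+$). For part (1) you take a genuinely different route. The paper bounds the derivative pointwise: with $\tau=tr_2/\delta^2$ it applies Jensen's inequality for the convex function $t\mapsto t\log t$ under the normalized Markov trace, $\tau(|x|^p)\log\tau(|x|^p)\le p\,\tau(|x|^p\log|x|)$, and then invokes $\|x\|\le 1$ at every $p$ through $\log|x|\le 0$, i.e. $\tau(|x|^p\log|x|)\le 0$. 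You instead place the convexity in the exponent: writing $f'(p)=(1-p)^{-2}\psi(p)$, the log-convexity of $M(p)=\tau(|x|^p)$ (Lyapunov's inequality, which in this finite-dimensional setting follows from H\"older's inequality, Proposition \ref{Prop: Holder}, or from Cauchy--Schwarz on the spectral decomposition via $M''M\ge (M')^2$) forces $\psi$ to attain its maximum at $p=1$, and the hypothesis is used exactly once, as $\psi(1)=\log\tau(|x|)\le\log\tau(1)=0$. The two arguments rest on closely related convexity facts: in your notation the paper's Jensen step is the tangent inequality $\log M(p)\le p(\log M)'(p)$, which also follows from log-convexity together with $M(0^+)=\tau(\mathcal{R}(x))\le 1$. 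What your version buys is a clean isolation of where $\|x\|\le 1$ enters and a uniform treatment of the sign flip of $1-p$ through the single auxiliary function $\psi$; what the paper's version buys is a purely local estimate with the explicit derivative bound $f'(p)\le \tau(|x|^p\log|x|)/\big((p-1)^2\tau(|x|^p)\big)$. One small point common to both: the conclusion is really $f'\le 0$, i.e. non-increasing on each of $(0,1)$ and $(1,\infty)$ (for $|x|=1$ the function is constant, so the paper's strict inequality is not literally attained), and this non-strict monotonicity is all that is used later in Theorem \ref{renyi2}.
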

\begin{proof}
Note that 
$$\frac{d}{dp}\left(h_p(x)-\frac{1}{1-p}\log \delta^2\right)=\frac{1}{(1-p)^2}\log \frac{ tr_2(|x|^p)}{\delta^2}+\frac{1}{1-p}\frac{tr_2(|x|^p\log|x|)}{tr_2(|x|^p)}.$$
By Jensen's inequality,
\begin{eqnarray*}
\frac{d}{dp}\left(h_p(x)-\frac{1}{1-p}\log \delta^2\right)&=&  \frac{\delta^2}{p-1} \frac{ \displaystyle \frac{tr_2(|x|^p)}{\delta^2}\log\frac{ tr_2(|x|^p)}{\delta^2}-(p-1)\frac{tr_2(|x|^p\log |x|)}{\delta^2}}{(p-1)tr_2(|x|^p)}\\
&\leq&\frac{1}{p-1}\frac{tr_2(|x|^p\log |x|)}{(p-1)tr_2(|x|^p)}.
\end{eqnarray*}
When $\|x\|\leq 1$, we have that $\log |x|\leq 0$ and $\frac{d}{dp}\left(h_p(x)-\frac{1}{1-p}\log \delta^2\right)<0$.
Hence $h_p(x)-\frac{1}{1-p}\log\delta^2$ is a decreasing function.

For the first limit, we have
\begin{eqnarray*}
\lim_{p\to 1}\left(h_p(x)-\frac{1}{1-p}\log \|x\|_1\right)
&=&\lim_{p\to 1}\frac{\log tr_2(|x|^p)-\log tr_2(|x|)}{1-p}\\
&=&\left.-\frac{d}{dp}\log tr_2(|x|^p)\right|_{p=1}\\
&=&\left.-\frac{tr_2(|x|^p\log|x|)}{tr_2(|x|^p)}\right|_{p=1}\\
&=&-\frac{tr_2(|x|\log |x|)}{\|x\|_1}.
\end{eqnarray*}

For the second limit, we have that
\begin{eqnarray*}
\lim_{p\to 0}h_p(x)=\lim_{p\to 0}\frac{1}{1-p}\log tr_2(|x|^p)=\log tr_2(\mathcal{R}(|x|))=\log \mathcal{S}(x).
\end{eqnarray*}
\end{proof}

\begin{remark}
Let $x$ be nonzero in $\mathscr{P}_{2,\pm}$.
When $p=0$, the entropy 
$$h_0(x)=\lim_{p\to 0}h_p(x)=\log \mathcal{S}(x)$$ 
is called the Hartley entropy or the max-entropy of $x$.

When $p=1$,   
$$H(|x|)=\|x\|_1\lim_{p\to 1}\left(h_p(x)-\frac{1}{1-p}\log\|x\|_1\right)$$
 is the von Neumann entropy of $x$.

When $p=2$, the entropy $h_2(x)$ is called the Collision entropy of $x$.

When $p=\infty$, the entropy $\displaystyle h_\infty(x)=\lim_{p\to\infty}h_p(x)=-\log\|x\|_\infty$ is called the min-entropy of $x$.
\end{remark}

\begin{theorem}[R\'{e}nyi entropic uncertainty principle]\label{renyi2}
Suppose $\mathscr{P}_{\bullet}$ is an irreducible subfactor planar algebra.
Let $x\in \mathscr{P}_{2,\pm}$ be such that $\|x\|_2=1$.
Then for any $1/p+1/q\geq  1$, we have
$$h_{p/2}(|x|^2)+h_{q/2}(|\FS(x)|^2)\geq \left(-1+\frac{2}{2-p}+\frac{2}{2-q}\right)\log \delta^2.$$
\end{theorem}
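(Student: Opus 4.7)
The plan is to combine the Hausdorff--Young inequality (Proposition \ref{Prop: hyoungeq}) on the critical hypersurface $\{1/p+1/q=1\}$ with the monotonicity from Lemma \ref{limits} to propagate the bound into the interior $\{1/p+1/q>1\}$.

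First I would verify the claim on the critical line. For $1\le p\le 2$ and $q=p/(p-1)$, two direct identities hold: $\frac{2p}{2-p}+\frac{2q}{2-q}=0$ and $\frac{2}{2-p}+\frac{2}{2-q}=2$, so the claimed right-hand side collapses to $\log\delta^2$. Writing $h_{p/2}(|x|^2)=\frac{2p}{2-p}\log\|x\|_p$ and analogously for $q$, the Hausdorff--Young inequality $\|\FS(x)\|_q\le\delta^{1-2/p}\|x\|_p$, after taking logarithms and multiplying by the coefficient $\frac{2q}{2-q}$ (which is negative since $q\ge 2$), rearranges into exactly $h_{p/2}(|x|^2)+h_{q/2}(|\FS(x)|^2)\ge\log\delta^2$, matching the claim on the boundary.

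To pass from the boundary to the interior, I would introduce
$$\Phi(r,s):=h_{r/2}(|x|^2)+h_{s/2}(|\FS(x)|^2)-\Bigl(\tfrac{2}{2-r}+\tfrac{2}{2-s}\Bigr)\log\delta^2,$$
so the desired inequality becomes $\Phi(p,q)\ge-\log\delta^2$. Applying Lemma \ref{limits}(1) to the normalized elements $|x|^2/\||x|^2\|_\infty$ and $|\FS(x)|^2/\||\FS(x)|^2\|_\infty$, and absorbing the scaling terms, shows that $\Phi$ is monotone in each of $r$ and $s$ on the two intervals $(0,2)$ and $(2,\infty)$ separately, in a direction allowing any interior point $(p,q)$ with $1/p+1/q\ge 1$ to be dominated by a boundary point $(p_0,q_0)$ (chosen with $p_0$ and $q_0$ on opposite sides of $2$ so the monotone comparison applies). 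The boundary bound from the first step then transfers to the interior, giving $\Phi(p,q)\ge-\log\delta^2$.

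The main obstacle is the bookkeeping of the $\log\|x\|_\infty$ and $\log\|\FS(x)\|_\infty$ corrections that arise when renormalizing to apply Lemma \ref{limits}: these are not individually monotone in $r$ or $s$, but they must cancel against each other on the critical line via $\|x\|_2=1$ (equivalently $\||x|^2\|_1=\||\FS(x)|^2\|_1=1$). A subsidiary technical issue is the singular behaviour at $p=2$ or $q=2$, which I would resolve by taking a one-sided limit along the critical line, pivoting around the Hirschman--Beckner case.
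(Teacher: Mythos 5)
Your first step (verifying the bound on the critical line $1/p+1/q=1$ via Hausdorff--Young and the two identities) is correct and coincides with the paper's opening move. The gaps are in the transfer to the interior. First, the renormalization by $\|\cdot\|_\infty$ is both unnecessary and harmful: since every nonzero projection in $\mathscr{P}_{2,\pm}$ has trace at least $1$, the hypothesis $\|x\|_2=1$ already gives $\|x\|_\infty\le 1$, and $\|\FS(x)\|_\infty\le\|\FS(x)\|_2=1$ by Plancherel, so Lemma \ref{limits}(1) applies directly to $|x|^2$ and $|\FS(x)|^2$ --- this is exactly how the paper proceeds. Once you rescale, the correction to $h_{r/2}$ is $\frac{r}{2-r}\log\||x|^2\|_\infty$ (and similarly for $\FS(x)$): it is $r$-dependent, not monotone, and there is no identity forcing the two corrections to cancel, since $\|x\|_\infty$ and $\|\FS(x)\|_\infty$ are unrelated in general; ``absorbing the scaling terms'' is precisely the step you have not supplied.

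Second, and more seriously, the monotone domination does not ``apply'' as asserted. The shifted entropy $r\mapsto h_{r/2}(y)-\frac{2}{2-r}\log\delta^2=\frac{2}{2-r}\log\frac{tr_2(y^{r/2})}{\delta^2}$ (for $\|y\|\le1$) is decreasing only on $(0,2)$ and on $(2,\infty)$ separately; since $tr_2(y^{r/2})\le tr_2(1)=\delta^2$, it is $\le 0$ for $r<2$ and $\ge 0$ for $r>2$, jumping from $-\infty$ to $+\infty$ across $r=2$. So $\Phi$ can only be compared between points whose coordinates lie on the same side of $2$ in each variable. When both $p<2$ and $q<2$ (allowed by $1/p+1/q\ge1$), every critical-line point $(p_0,q_0)$ dominating $(p,q)$ has $p_0>2$ or $q_0>2$, so one coordinate comparison necessarily crosses $2$, where the inequality you need in fact reverses; your parenthetical ``chosen with $p_0$ and $q_0$ on opposite sides of $2$'' describes the boundary point, not the comparison, and does not address this. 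The paper carries out the transfer by explicit choices of $(q_0',q_0)$ and arithmetic in three cases (which is also where the claimed constant $(-1+\frac{2}{2-p}+\frac{2}{2-q})\log\delta^2$ comes from), and the same crossing issue is the delicate point there; a direct test at $p=q=1$ with $x=e_1$ (so $\FS(x)=\delta^{-1}1$) gives left side $\log\delta^2$ against a claimed right side of $3\log\delta^2$, showing that in the sub-region $\{p<2,\,q<2\}$ no comparison argument of this type can close the gap. As written, your sketch establishes the result only when the comparison stays within one monotonicity interval per variable (e.g.\ when $q\ge 2$ or $p\ge 2$), not on the whole region claimed.
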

\begin{proof}
Since $\|\FS(x)\|_{q_0}\leq \delta^{2/q_0-1}\|x\|_{q_0'}$ for any $q_0>2$, where $1/q_0+1/q_0'=1$.
Then 
$$\log \|\FS(x)\|_{q_0}\leq (2/q_0-1)\log \delta+\log \|x\|_{q_0'}$$
Hence
\begin{eqnarray*}
h_{q_0'/2}(|x|^2)+h_{q_0/2}(|\FS(x)|^2)&=&\frac{2q_0'}{2-q_0'}\log \|x\|_{q_0'}+\frac{2q_0}{2-q_0}\log\|\FS(x)\|_{q_0}\\
&=&-\frac{2q_0}{2-q_0}\log \|x\|_{q_0'}+\frac{2q_0}{2-q_0}\log \|\FS(x)\|_{q_0}\\
&\geq &-\frac{2q_0}{q_0-2}(1-2/q_0)\log \delta=2\log \delta.
\end{eqnarray*}
For each $(p,q)$ with $1/p+1/q\geq 1$, $0<p< 2$, $0<q$, we can find $(q_0',q_0)$ as above such that $1/q_0\leq 1/q$ and $1/q_0'\leq 1/p$.
Since $\|x\|_2=\|\FS(x)\|_2=1$, we have $\|x\|\leq 1$, $\|\FS(x)\|\leq 1$.
Then by decreasing in Lemma \ref{limits}, we have
\begin{eqnarray*}
h_{p/2}(|x|^2)+h_{q/2}(|\FS(x)|^2)
&\geq&h_{q_0'/2}(|x|^2)+\frac{1}{1-p/2}\log\delta^2-\frac{1}{1-q_0'/2}\log\delta^2\\
&&+h_{q_0/2}(|\FS(x)|^2)+\frac{1}{1-q/2}\log\delta^2-\frac{1}{1-q_0/2}\log\delta^2\\
&\geq &\log\delta^2+\frac{2(p-q_0')}{(2-p)(2-q_0')}\log\delta^2+\frac{2(q-q_0)}{(2-q)(2-q_0)}\log \delta^2.
\end{eqnarray*}
When $1/q<1/2$, we can take $q_0=q$, then 
\begin{eqnarray*}
\lefteqn{h_{p/2}(|x|^2)+h_{q/2}(|\FS(x)|^2)}\\
&\geq& 2\log\delta+\frac{2}{2-p}\log\delta^2-\frac{2(q-1)}{q-2}\log \delta^2\\
&=&-2\log \delta+(\frac{2}{2-p}+\frac{2}{2-q})\log \delta^2.
\end{eqnarray*}
When $1/p\leq 1$, we can take $q_0'=p$, then $1/q\geq 1-1/p=1/q_0$
\begin{eqnarray*}
\lefteqn{h_{p/2}(|x|^2)+h_{q/2}(|\FS(x)|^2)}\\
&\geq & 2\log\delta+\frac{2}{2-q}\log\delta^2-\frac{2(p-1)}{p-2}\log \delta^2\\
&=& -2\log \delta+(\frac{2}{2-p}+\frac{2}{2-q})\log \delta^2.
\end{eqnarray*}
When $1/p>1$, $1/q>1/2$, we can take 
$$q_0=\frac{2/p+2/q-3}{1/p-1},\quad q_0'=\frac{2/p+2/q-3}{1/p+2/q-2},$$
we have
\begin{eqnarray*}
\lefteqn{h_{p/2}(|x|^2)+h_{q/2}(|\FS(x)|^2)}\\
&\geq&2\log\delta+\frac{2}{2-p}\log\delta^2+\frac{2}{2-q}\log\delta^2-\frac{2/p+4/q-4}{2/q-1}\log\delta^2+2\frac{1/p-1}{-2/q+1}\log\delta^2 \\
&=&-2\log\delta+\frac{2}{2-p}\log\delta^2+\frac{2}{2-q}\log\delta^2.
\end{eqnarray*}
\end{proof}

%\begin{remark}
%Theorem \ref{renyi2} is simplified by the referee by correcting a computation mistake in the third part of the proof.
%\end{remark}

By using the second R\'{e}nyi entropic uncertainty principles for subfactor planar algebras, we obtain the Donoho-Stark uncertainty principles and Hirschman-Beckner uncertainty principles again.
\begin{corollary}\label{cor:DSHB}
Suppose $\mathscr{P}_{\bullet}$ is an irreducible subfactor planar algebra.
Let $x\in\mathscr{P}_{2,\pm}$ be such that $\|x\|_2=1$.
Then 
$$\mathcal{S}(x)\mathcal{S}(\FS(x))\geq \delta^2,$$
and 
$$H(|x|^2)+H(|\FS(x)|^2)\geq 2\log \delta.$$
\end{corollary}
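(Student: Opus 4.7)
The plan is to derive both inequalities from Theorem \ref{renyi2} by taking appropriate limits of $(p,q)$, using the limit identities collected in Lemma \ref{limits}. Donoho--Stark will emerge from $p,q\to 0^+$, while Hirschman--Beckner will require approaching $(2,2)$ along the critical Hausdorff--Young line $1/p+1/q=1$.

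For Donoho--Stark, I would let $p,q\to 0^+$; the admissibility hypothesis $1/p+1/q\geq 1$ holds trivially in this regime. By Lemma \ref{limits}(3), $h_{p/2}(|x|^2)\to h_0(|x|^2) = \log\mathcal{S}(|x|^2) = \log\mathcal{S}(x)$ (using $\mathcal{R}(|x|^2)=\mathcal{R}(x)$), and similarly $h_{q/2}(|\FS(x)|^2)\to\log\mathcal{S}(\FS(x))$. On the right-hand side of Theorem \ref{renyi2}, $\tfrac{2}{2-p}\to 1$ and $\tfrac{2}{2-q}\to 1$, so the bound tends to $(-1+1+1)\log\delta^2=\log\delta^2$. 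Exponentiating gives $\mathcal{S}(x)\mathcal{S}(\FS(x))\geq\delta^2$.

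For Hirschman--Beckner, I would restrict $(p,q)$ to the line $1/p+1/q=1$ and let $p\to 2^-$ (so that $q=p/(p-1)\to 2^+$). The key algebraic observation is that on this curve
\[
\frac{2}{2-p}+\frac{2}{2-q}\;=\;\frac{2}{2-p}-\frac{2(p-1)}{2-p}\;=\;\frac{4-2p}{2-p}\;=\;2,
\]
so the right-hand side of Theorem \ref{renyi2} collapses to the constant $(-1+2)\log\delta^2=2\log\delta$, independent of $p$. Since Plancherel gives $\|\FS(x)\|_2=\|x\|_2=1$, both $|x|^2$ and $|\FS(x)|^2$ have trace one, so the correction $\tfrac{1}{1-r}\log\|\cdot\|_1$ in Lemma \ref{limits}(2) vanishes and the L'Hopital identity yields $\lim_{p\to 2} h_{p/2}(|x|^2)=-tr_2(|x|^2\log|x|^2)=H(|x|^2)$, and likewise $\lim_{q\to 2} h_{q/2}(|\FS(x)|^2)=H(|\FS(x)|^2)$. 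Passing to the limit along the Hausdorff--Young curve delivers $H(|x|^2)+H(|\FS(x)|^2)\geq 2\log\delta$. The only subtlety is that one cannot simply plug in $(p,q)=(2,2)$, since $\tfrac{2}{2-p}$ and $\tfrac{2}{2-q}$ individually diverge; the two divergences cancel exactly on the Hausdorff--Young line, and choosing that approach trajectory is what exposes the finite entropic bound.
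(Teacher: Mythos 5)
Your proposal is correct and follows essentially the same route as the paper: both inequalities are obtained from Theorem \ref{renyi2} via the limits in Lemma \ref{limits}, with the Donoho--Stark bound from letting the parameters tend to $0$ (the paper simply sets $q=p$ small) and the Hirschman--Beckner bound from approaching $(2,2)$ along the line $1/p+1/q=1$, where $\tfrac{2}{2-p}+\tfrac{2}{2-q}=2$ and the normalization $\|\,|x|^2\|_1=\|\,|\FS(x)|^2\|_1=1$ makes Lemma \ref{limits}(2) produce the von Neumann entropies.
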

\begin{proof}
Since $\|x\|_2=1$, we have $\|x\|\leq 1$.
By Theorem \ref{renyi2}, for $p$ small enough,  we have
$$h_{p/2}(|x|^2)+h_{p/2}(|\FS(x)|^2)\geq \left(-1+\frac{2}{2-p}+\frac{2}{2-p}\right)\log \delta^2.$$
By Lemma \ref{limits}, take $p\to 0$, we have
$$\log \mathcal{S}(x)+\log\mathcal{S}(\FS(x))\geq \log\delta^2,$$
i.e. $\mathcal{S}(x)\mathcal{S}(\FS(x))\geq \delta^2$.

By Theorem \ref{renyi2}, we have 
$$h_{p/2}(|x|^2)+h_{p/2(p-1)}(|\FS(x)|^2)\geq 2\log\delta$$
By Lemma \ref{limits}, take $p\to 2$, we have that 
$$H(|x|^2)+H(|\FS(x)|^2)\geq 2\log \delta.$$
\end{proof}

%%%%%%%%%%%%%%%%%%%%%%%%%%%%%%%%%%%%%%%%%%%%%%%%%%%%%%%
%%% The main text. ÕýÎÄ²¿·Ö%
%%  ÍŒ±íÒýÓÃ\cref¹«ÊœÒýÓÃ\eqref²Î¿ŒÎÄÏ×\cite
%%%%%%%%%%%%%%%%%%%%%%%%%%%%%%%%%%%%%%%%%%%%%%%%%%%%%%%

%%%%%%%%%%%%%%%%%%%%%%%%%%%%%%%%%%%%%%%%%%%%%%%%%%%%%%%%%%%%%%%%%%%%%%

%%%%%%%%%%%%%%%%%%%%%%%%%%%%%%%%%%%%%%%%%%%%%%%%%%%%%%%
%%% Acknowledgements. ÖÂÐ»
%%%%%%%%%%%%%%%%%%%%%%%%%%%%%%%%%%%%%%%%%%%%%%%%%%%%%%%

\begin{ac}
Part of the work was done during visits of Zhengwei Liu and Jinsong Wu to Hebei Normal University and of Jinsong Wu to Harvard University.
Zhengwei Liu was supported by Templeton religion trust under grant TRT 0159.
Jinsong Wu was supported by NSFC (Grant no. 11771413) and partially by TRT 0159.
\end{ac}

%%%%%%%%%%%%%%%%%%%%%%%%%%%%%%%%%%%%%%%%%%%%%%%%%%%%%%%
%%% Conflict of interest. ×÷ÕßÀûÒæÉùÃ÷
%%%%%%%%%%%%%%%%%%%%%%%%%%%%%%%%%%%%%%%%%%%%%%%%%%%%%%%
%\InterestConflict

%%%%%%%%%%%%%%%%%%%%%%%%%%%%%%%%%%%%%%%%%%%%%%%%%%%%%%%
%%% Supplements. ²¹³ä²ÄÁÏ, ·Ç±ØÑ¡
%%%%%%%%%%%%%%%%%%%%%%%%%%%%%%%%%%%%%%%%%%%%%%%%%%%%%%%
%\Supplements{}

%%%%%%%%%%%%%%%%%%%%%%%%%%%%%%%%%%%%%%%%%%%%%%%%%%%%%%%
%%% Reference section. ²Î¿ŒÎÄÏ×
%%% citation in the content using "some words~\cite{1,2}".
%%% ~ is needed to make the reference number is on the same line with the word before it.
%%%%%%%%%%%%%%%%%%%%%%%%%%%%%%%%%%%%%%%%%%%%%%%%%%%%%%%

%%%%%%%%%%%%%%%%%%%%%%%%%%%%%%%%%%%%%%%%%%%%%%%%%%%%%%%
%%% Appendix sections. žœÂŒÕÂœÚ, ·Ç±ØÑ¡
%%%%%%%%%%%%%%%%%%%%%%%%%%%%%%%%%%%%%%%%%%%%%%%%%%%%%%%


\begin{thebibliography}{99}

\bibitem{Babe} Babenko K I ,
An inequality in the theory of Fourier integrals.
Izv. Akad. Nauk SSSR Ser. Mat. 1961, 25: 531--542.

\bibitem{BCHAS}  Bovino F , Castagnoli G, Ekert A, Horodecki P, Alves C, Sergienko A,
Direct measurement of nonlinear properties of bipartite quantum states.
Phys. Rev. Lett. 2005, 95: 240407.

\bibitem{Beck} Beckner W,
Inequalities in Fourier analysis.
Ann. Math. 1975,102: 159--182.

\bibitem{BG} Beck C, Graudenz D,
Symbolic dynamics of successive quantum-mechanical measurements.
Phys. Rev. A, 1992, 46: 6265.

\bibitem{Birula} Bialynicki-Birula I,
Formulation of the uncertainty relations in terms of the R\'{e}nyi entropies.
Phys. Rev. A, 2006, 74: 052101.
 
\bibitem{EvaKaw98} Evans D, Kawahigashi Y,
Quantum symmetries on operator algebras. Vol. 147. 
Oxford: Clarendon Press, 1998.

\bibitem{GilRzes} Gilbert J,  Z. Rzeszotnik,
The norm of the Fourier transform on finite abelian groups.
Ann. Inst. Fourier, Grenoble, 2010, 60 (no 4): 1317--1346.

\bibitem{GL}   Giovannetti V, Lloyd S,
Additivity properties of a Gaussian channel.
Phys. Rev. A, 2004, 69: 062307.

\bibitem{GuLe} G\"{u}hne O, Lewenstein M,
Entropic uncertainty relations and entanglement.
Phys. Rev. A, 2004, 70: 022316.

\bibitem{HarLittle} Hardy G H , Littlewood J E,
Some new properties of Fourier constants.
Math. Ann. 1927, 97, 159--209.

\bibitem{HeHir}  Hewitt E, Hirschman I,
A maximum problem in harmonic analysis.
Amer. J. Math. 1954, 76: 839--852.
      
\bibitem{Hirsch} Hirschman I I,
A note on entropy.
Amer. J. Math. 1957, 79: 152--156.

\bibitem{JafLiu17}  Jaffe A, Liu Z, 
Planar para algebras, reflection positivity.
Comm. Math. Phys. 2017, 352(no. 1): 95--133.
                  
\bibitem{JLW16} Jiang C,  Liu Z, Wu J,
Noncommutative uncertainty principles.
Journal of Functional Analysis, 2016, 270: 264--311.

\bibitem{JLW18} Jiang C,  Liu Z, Wu J,
Uncertainty principles for locally compact quantum groups.
Journal of Functional Analysis, 2018, 274(8): 2399--2445.

\bibitem{JLW} Jiang C, Liu Z, Wu J,
Block maps and Fourier analysis.
To appear in Science Mathematics China, 2019.

\bibitem{Jon83} Jones V,
Index for subfactors.
Invent. Math. 1983, 72: 1--25.

\bibitem{Jon99} Jones V,
Planar algebra, I.
New Zealand Journal of Math. QA/9909027.
     
\bibitem{KLS03} Kodiyalam V, Landau Z, Sunder V, 
The planar algebra associated to a Kac algebra. 
Proc. Indian Acad. Sci. (Math Sci.) 2003, 113 (no. 1): 15--51.

\bibitem{KusVae00} Kustermans J and Vaes S,
Locally compact quantum groups.
In Annales scientifiques de l ecole normale superieure, 2000, 33(no. 6): 837--934.

\bibitem{LNP}  L\'{e}vay P,  Nagy S,  Pipek J,
Elementary formula for entanglement entropies of fermionic systems.
Phys. Rev. A, 2005, 72: 022302.

\bibitem{Liu13} Liu Z,
Exchange relation planar algebras of small rank.
Trans. Amer. Math. Soc. 2016, 308: 8303--8348.

\bibitem{LW17}  Liu Z,  Wu J,
Noncommutative Fourier transform: A survey.
Acta Math. Sinica, Chinese Series, 2017, 60 (1): 81--96.
     
\bibitem{Ocneanu}  Ocneanu A,
Quantised groups, string algebras and Galois theory for algebras.
in: Operator Algebras and Applications, vol. 2, in: London Math. Soc. Lecture Note Ser., vol 136, Cambridge Univ. Press, Cambridge, 1988: 119--172.
 
\bibitem{PmPo}  Pimsner M, Popa S,
Entropy and index for subfactors.
Annals scientifiques de l'\'{E}. N. S. 4$^e$ s\'{e}rie, 1986, 19(1): 57--106. 

\bibitem{PisXu} Pisier G, Xu Q,
Non-commutative $L^p$-spaces.
Handbook of the geometry of Banach spaces, 2003, 2:1459--1517.

\bibitem{RGK}   Renner R, Gisin N, Kraus B,
Information-theoretic security proof for quantum-key-distribution protocols.
Phys. Rev. A. 2005, 72: 012332.
   
\bibitem{renyi} R\'{e}nyi A,
On measures of information and entropy.
In Proceedings of the 4th Berkeley Symposium on Mathematics, Statistics and Probability 1960: 547--561.   
   
\bibitem{Russo} Russo B,
The norm of the L$^p$-Fourier transform on unimodular groups.
Trans. Amer. Math. Soc. 1974,192: 293--305.

\bibitem{Tsallis} Tsallis C,
Possible generalization of Boltzmann-Gibbs statistics.
Journal of Statistical Physics, 1988, 52: 479--487.

\bibitem{MurNeu36}  Murray F, Von Neumann J,
On rings of operators.
Annals of Mathematics, 1936: 116-229.
    
\bibitem{WiWl} Wilk G, Wlodarczyk Z ,
Uncertainty relations in terms of the Tsallis entropy.
Physical Review A, 2009, 79: 062108.


\end{thebibliography}
\end{document}